\newcommand{\ve}{\varepsilon}
\newtheorem{definition}{Definition}[section]
\newtheorem{lemma}{Lemma}[section]
\newtheorem{theorem}{Theorem}[section]
\theoremstyle{remark}% \newtheorem{Cor}{Corollary}
\newtheorem{remark}[theorem]{Remark}
\begin{document}

\title[well-posedness  and   blowup of  the geophysical boundary layer problem]{ Well-posedness  and   blowup of  the geophysical boundary layer problem}

\author{X. Wang}
\address{Xiang Wang
\newline\indent
School of Mathematical Sciences, Shanghai Jiao Tong University,
Shanghai, P. R. China}
\email{wang\_xiang@sjtu.edu.cn}

\author{Y.-G. Wang}
\address{Ya-Guang Wang
\newline\indent
School of Mathematical Sciences, MOE-LSC and SHL-MAC, Shanghai Jiao Tong University,
Shanghai, 200240, P. R. China}
\email{ygwang@sjtu.edu.cn}

\maketitle

\date{}

\begin{abstract}
Under the assumption that the initial velocity and outflow velocity are analytic in the horizontal variable,  the local well-posedness of the geophysical boundary layer problem is obtained by using energy method in the weighted Chemin-Lerner spaces. Moreover, when the initial velocity and outflow velocity  satisfy certain condition on a transversal plane, it is proved that the $W^{1,\infty}-$norm of any smooth solution decaying exponentially in the normal variable to the geophysical boundary layer problem blows up in a finite time.
\end{abstract}
~\\
{\textbf{\scriptsize{2010 Mathematics Subject Classification:}}\scriptsize{ 35Q30, 76D10}}.
~\\
\textbf{\scriptsize{Keywords:}} {\scriptsize{Geophysical boundary layer problem, existence of analytic solution, blowup.}}

%++++++++++++++++++++++++++++++++++++++++++++++++++++++++++++++++++++++++++++++++++++++++
%
%
%                              Section 1
%
%
%
%++++++++++++++++++++++++++++++++++++++++++++++++++++++++++++++++++++++++++++++++++++++++++

\section{Introduction }
In this paper, we  consider the following  initial boundary value problem in the domain $Q_T=\{0< t< T, x\in \mathbb{R}, y> 0\}$,
%for $(t, x, y)\in [0, \infty)\times \Omega$
\begin{equation}\label{OGP} \left\{\!\!
\begin{array}{lc}
\partial_t u+u\partial_x u+v\partial_y u+\int^y_{+\infty}(u-U)dy'-\partial_{y}^2u=\partial_t U+U\partial_x U, &\\[8pt]
\partial_x u+\partial_{y}v=0, &\\[8pt]
u|_{t=0}=u_0(x, y), &\\[8pt]
(u, v)|_{y=0}=(0, 0),  ~~\lim \limits_{y \to
+\infty} u(t,x,y)=U(t, x),
\end{array}
\right.
\end{equation}
where $(u,v)$ is the velocity field, and $U(t,x)$ is the  tangential velocity   of  the outer flow.  %, we denote

 The problem \eqref{OGP} describes the oceanic current near the western coast, it can be derived from the beta plane approximation model of  the oceanic current motion  at midlatitudes under the action of wind and  the Coriolis force in the large Reynolds number and beta parameter limit. By properly scaling in certain geophysical regime, and omitting the bottom friction and topography, the beta plane approximation of the oceanic current can be described by the following two dimensional homogeneous model (see \cite{De-G,Ped}) in $\{x\in \mathbb{R}, Y>0\}$,
 \begin{equation}\label{nsc00}
\left\{
\begin{array}{l}
 \partial_t {\bf U}+ {\bf U}\cdot\nabla {\bf U}-\beta x{\bf U}^{\bot}+\nabla \Pi-Re^{-1}\Delta {\bf U}=\beta\tau ,\\[2mm]
 \text{div} ~{\bf U}=0,\\[2mm]
 {\bf U}|_{Y=0}=0
\end{array}
\right.
\end{equation}
 where the Cartesian-like coordinates $(x,Y)$  represent latitude and longitude respectively, and ${\bf U}=(U, V)^T$, $\Pi$,  $Re$ and $\beta$  are the velocity, the pressure of  fluid, the Reynolds number and the beta-plane parameter respectively,  $\tau=(\tau_1,\tau_2)^T$ is the shear tensor created by wind, and $-x{\bf U}^{\bot}$ represents the effect of the Coriolis force created by rotation with ${\bf U}^{\bot}=(-V,U)^T$.  When $\text{Re}=\beta^2$, for which the inertial force, the Coriolis force and viscous friction have the same order in boundary layer, by multi-scale analysis it is known that as $\epsilon={\rm Re}^{-1}\to 0$, the solution of
 \eqref{nsc00} near $\{Y=0\}$ behaves as
 $$
 \begin{cases}
 U(t,x,Y)=u(t,x,\frac{Y}{\sqrt{\epsilon}})+o(1)\\
 V(t,x,Y)=\sqrt{\epsilon}v(t,x,\frac{Y}{\sqrt{\epsilon}})+o(\sqrt{\epsilon})
 \end{cases}
 $$
 with $(u(t,x,y), v(t,x,y))$ satisfying the geophysical boundary layer problem \eqref{OGP}. More detail of the derivation can be found in  \cite{W-WW,G-W}.

For the two-dimensional incompressible Navier-Stokes equations with non-slip boundary condition, Prandtl introduced in \cite{P} that in the small viscosity limit, the flow near the boundary is described
by a problem similar to \eqref{OGP} without the
integral term, which is called the Prandtl equation. Under the monotonicity assumption, $u_y>0$, the well-posedness of the Prandtl equation is established in \cite{R-W-X-Y,L-W-Y,O-S,M-W,X-Z} and references therein.
On the other hand, without the monotonicity condition, the well-posedness of the Prandtl equation was obtained in the analytic class and the Gevery class, cf. \cite{Caf-S,L-C-M,Z-Z,L-Y,Di-G}, and the blowup of the Sobolev norm of  solutions in a finite time was given in \cite{E-E,Kuk-V-W} for certain class initial data.

Certain formal discussion on the boundary layers of geophysical fluids can be found in \cite{Ped}. For the system \eqref{nsc00} with an additional bottom friction term, the behavior of the Munk layers and Stommel layers was given in \cite{De-G}. Recently, the well-posedness of a two-dimensional steady geophysical boundary layer problem was studied in \cite{DA-P}.

The aim of this paper is to study the local well-posedness  and  blowup of solutions to the unsteady geophysical boundary layer problem \eqref{OGP}.
  Compared with the classical Prandtl equation,  there has an additional integral term in \eqref{OGP}. To deal with this integral term, we shall study this problem in a weighted function space with respect to  the normal variable.
  By developing the energy method given  in \cite{Z-Z,Zhu-W} and estimating the additional integral term, we shall obtain the well-posedness of the geophysical boundary layer problem \eqref{OGP} when both of the initial data and the outer flow velocity are analytic in the tangential variable.
   On the other hand,  in a way similar to that given in \cite{Kuk-V-W},  by constructing a Lyapunov functional we shall deduce that the $W^{1,\infty}-$norm of the solution to the geophysical boundary layer problem \eqref{OGP} must blow up in a finite time for certain class of initial data and outer flow, which shows that in general, the above analytic solution exists only locally in time.
  %, it suffices to prove the blowup of the functional.
  Due to the integral term, we shall require  that the initial data and outer flow satisfy
\begin{equation}\label{Blowupcondition13}
u_{0x}(0,y)\leq U_{x}(0,0)
\end{equation}
such that the integral term  in \eqref{OGP} could keep the sign unchanged. It is interesting to see that this integral term  has a sensitive effect on the formation of singularity when we study  the blowup mechanism of the solution to this problem. Especially, in the case of the outer flow tangential velocity being zero identically, we get that the blowup of the solution always occurs in a finite time for any nonzero initial data, but, for which the classical Prandtl equation has a almost global solution (\cite{I-V, Z-Z}).    %\For convenience, we transfer the system \ref{OGP} into a parallel system as follows.

  To state our main results, as in \cite{Z-Z},
 we introduce
 $$\phi(t,y)={\rm Erf}\left(\frac{y}{\sqrt{4(t+1)}}\right) ~~\text{with}~~{\rm Erf}(y)=\frac{2}{\sqrt{\pi}}\int_{0}^{y}e^{-z^2}dz,$$
to homogenize the condition of $u$ at infinity given in \eqref{OGP}.
%therefore  we can study the well-posedness of the problem in the energy spaces for the $y$-variable.
Obviously, $\phi(t,y)$ is a solution to the problem
\begin{equation*}\label{phi}
\left\{
\begin{array}{l}
\partial_t \phi -\partial_{y}^{2}\phi =0,\\
\phi\big|_{y=0}=0,~~\lim\limits_{y\to +\infty}\phi(t,y)=1,\\
 \phi\big|_{t=0}={\rm Erf}(\frac{y}{2}).
\end{array}
\right.
\end{equation*}
Let $u^s=U\phi$ and $w = u-u^s$. From \eqref{OGP}, we know that  $w$ satisfies the following problem
\begin{equation}\label{homGP}
\left\{
\begin{array}{l}
\partial_t w +(w+u^s)\partial_{x}w+w\partial_{x}u^s-\int_{0}^{y}\partial_{x}(w+u^s)dy'
\partial_{y}(w+u^s)+\int_{+\infty}^{y}w dy'\\ [8pt]
~~~~~~~-\partial_{y}^{2}w
=(1-\phi)(\partial_t U+(1+\phi)U\partial_x U)-\int_{y}^{+\infty}U(1-\phi)dy',\\[8pt]
w\big|_{y=0}=0,~~\lim\limits_{y\to +\infty}w=0, \\[8pt]
w\big|_{t=0}=w_0(x,y)\triangleq u_0(x,y)-U(0,x)Erf(\frac{y}{2}).
\end{array}
\right.
\end{equation}

By applying the Littlewood-Paley theory, we shall obtain the existence and uniqueness of a solution to the problem \eqref{homGP} in the weighted Chemin-Lerner spaces, when the initial data and outflow velocity are analytic in $x\in \mathbb{R}$.
 \begin{theorem}\label{existence}
 For a given  $T_{0}>0$,   assume that the initial velocity $w_{0}(x,y)$ and the outflow velocity $U(t,x)$ are
 analytic in $x\in \mathbb{R}$, and
 $$e^{\langle D\rangle}w_0\in B_{\psi_{0}}^{\frac{1}{2},0}$$ and
 $$ e^{\langle D\rangle}U\in \tilde{L}^{\infty}_{T_{0}}(B^{\frac{1}{2}})
 \cap
 \tilde{L}^{\infty}_{T_{0}}(B^{1})
  \cap
  \tilde{L}^{\infty}_{T_{0}}(B^{\frac{3}{2}}), \quad
  e^{\langle D\rangle}U_t \in \tilde{L}^{\infty}_{T_{0}}(B^{\frac{1}{2}}) , $$
 where the spaces given at above will be defined in Definition \ref{def1} with the Fourier multiplier $D$ and the weight $\psi_{0}$ being in \eqref{psi}. Then,  there exists $0<T^*\leq T_{0}$  such that the  problem \eqref{homGP} has a unique solution  $e^{\Phi(t,D)}w\in \tilde{L}^{\infty}_{T^{*}}(B_{\psi}^{\frac{1}{2},0})$,
 where $\Phi(t,D)$  is the operator associated with the symbol $\Phi(t,\xi)$ being given in \eqref{onePhi}.
   \end{theorem}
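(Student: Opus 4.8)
The plan is to prove Theorem \ref{existence} by a weighted analytic energy method, in which the analyticity radius in the tangential variable $x$ is allowed to decay in time so as to generate a dissipation in frequency that compensates the one-derivative loss inherent in Prandtl-type convection. Writing $w_\Phi = e^{\Phi(t,D)}w$ with $\Phi(t,\xi)$ as in \eqref{onePhi}, I would first apply $e^{\Phi(t,D)}$ to \eqref{homGP}. Since $\Phi(t,\xi)$ is of the form $(a-\lambda\theta(t))\langle\xi\rangle$, decreasing in $t$ along a profile $\theta(t)$ to be chosen, the time differentiation of $e^{\Phi}$ produces a favorable term of size $\lambda\dot\theta(t)\langle D\rangle$; the strategy is to arrange that this term, together with the vertical diffusion $-\partial_y^2$, dominates every contribution in which one extra tangential derivative is lost. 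The Gaussian weight $\psi$ from \eqref{psi} is carried through all the estimates, both to control the convection coefficient and, crucially, to make sense of and bound the new nonlocal term $\int_{+\infty}^y w\,dy'$.

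Next I would localize in tangential frequency by the Littlewood-Paley blocks $\Delta_k$, set $w_{\Phi,k}=\Delta_k w_\Phi$, and carry out the weighted $L^2$ energy estimate on each dyadic piece by testing the localized equation against $\psi^2 w_{\Phi,k}$. The diffusion term yields $\|\partial_y(\psi w_{\Phi,k})\|_{L^2}^2$ up to weight commutators that are absorbed by lower-order quantities. For the transport and stretching terms $(w+u^s)\partial_x w$, $w\partial_x u^s$ and the vertical convection $-\int_0^y\partial_x(w+u^s)\,dy'\,\partial_y(w+u^s)$, I would use Bony's paraproduct decomposition, estimate the resulting pieces in the Chemin-Lerner norm $\tilde L^\infty_{T}(B^{1/2,0}_\psi)$, and propagate the frequency weight through the commutators via $e^{\Phi(t,\xi)}\le e^{\Phi(t,\xi-\eta)}e^{\Phi(t,\eta)}$. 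In every term in which a full $\partial_x$ falls on the highest-frequency factor one collects a factor $2^k$ that, after summation against the $2^{k/2}$ of the $B^{1/2}$ norm, is precisely controlled by the analytic gain $\lambda\dot\theta(t)\,2^{k/2}\|w_{\Phi,k}\|$ coming from the decay of $\Phi$.

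The genuinely new estimate is for the integral term $\int_{+\infty}^y w\,dy'$, which has no analogue in the classical Prandtl equation. Here the exponential weight is essential: using $w\to0$ as $y\to+\infty$ together with a Hardy-type inequality in the normal variable adapted to $\psi$, one bounds $\|\psi\int_{+\infty}^y w_{\Phi,k}\,dy'\|_{L^2}$ by $\|\psi w_{\Phi,k}\|_{L^2}$, so that this term contributes only a zeroth-order (in $x$) perturbation and is harmless after summation. Collecting all contributions leads to a differential inequality of the form $\tfrac{d}{dt}\|w_\Phi\|_{B^{1/2,0}_\psi}^2 + c\,\|\partial_y(\psi w_\Phi)\|^2 + (\lambda|\dot\theta| - C(1+\|w_\Phi\|_{B^{1,0}_\psi}))\|w_\Phi\|_{B^{3/2,0}_\psi}^2 \le C(\|U\|,\|U_t\|)$; choosing $\lambda$ large, then restricting to a short time $T^*\le T_0$ on which $\|w_\Phi\|_{B^{1,0}_\psi}$ stays bounded, makes the bracketed coefficient nonnegative and closes the a priori estimate. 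Note that the three tangential regularities $B^{1/2},B^1,B^{3/2}$ appearing here match exactly the regularity hypotheses imposed on $U$ in the statement.

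Finally, I would construct solutions by a standard approximation scheme (a Friedrichs mollification or parabolic/iteration regularization of \eqref{homGP}), for which the uniform-in-parameter bound just derived supplies enough compactness to pass to the limit, and I would obtain uniqueness by running the same weighted analytic estimate on the difference of two solutions. The main obstacle throughout is the simultaneous presence of the one-derivative loss in the vertical convection and the new nonlocal term: one must verify that the analytic gain is strong enough to absorb the former while the weight is tuned exactly so that the latter stays lower order, which is the delicate point forcing the use of the weighted Chemin-Lerner framework rather than a plain analytic energy estimate.
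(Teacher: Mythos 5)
Your proposal is correct and follows essentially the same route as the paper: the time-decaying analytic weight $\Phi(t,\xi)=(1-\lambda\theta(t))\langle\xi\rangle$ with $\dot\theta$ tied to norms of the solution, dyadic localization and energy estimates in weighted Chemin--Lerner spaces with Bony decomposition for the convective terms (the paper quotes these from \cite{Zhu-W}), a dedicated weighted estimate for the new term $\int_{+\infty}^{y}w\,dy'$, a parabolic regularization $-\frac{1}{n^2}\partial_x^2$ for existence, and the same scheme run on the difference of two solutions (with a loss of analyticity radius $\delta<1$) for uniqueness. The only immaterial deviation is in the integral term: you invoke a Hardy/Schur-type bound of $\|e^{\psi}\int_{+\infty}^{y}w\,dy'\|_{L^2}$ by $\langle t\rangle^{\gamma/2}\|e^{\psi}w\|_{L^2}$, whereas the paper routes it by Fubini through the $y$-weighted norm $\|ye^{\psi}\Delta_k w_{\Phi}\|_{L^2}$, absorbed by the dissipation $-(\psi_t+2\psi_y^2)\gtrsim y^2(1+t)^{-\gamma-1}$ of the time-decaying Gaussian weight together with the $\langle t\rangle^{\gamma}$ term built into $\dot\theta$; both are valid.
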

 \iffalse
 Moreover there holds
 \begin{equation*}
 \begin{split}
 &\|w_{\Phi}\|_{\tilde{L}^{\infty}_{T^{*}}(B_{\psi}^{\frac{1}{2},0})}+
 \|\sqrt{-(\psi_t+2\psi_y^2)}w_{\Phi}\|_{\tilde{L}^{2}_{T^{*}}(B_{\psi}^{\frac{1}{2},0})}
 +\|\partial_yw_{\Phi}\|_{\tilde{L}^{2}_{T^{*}}(B_{\psi}^{\frac{1}{2},0})}\\
 &~~~~+\sqrt{\lambda}\Big(\|w_{\Phi}\|_{\tilde{L}^{2}_{T^{*},\dot{\theta}}(B_{\psi}^{\frac{1}{2},0})}
 +\|w_{\Phi}\|_{\tilde{L}^{2}_{T^{*},\dot{\theta}}(B_{\psi}^{\frac{1}{2},0})}
 \Big)\\
 \lesssim &\|e^{\langle D\rangle}w_{0}\|_{B^{\frac{1}{2},0}_{\psi_{0}}}
 +\sigma (T^{*})^{\frac{1}{2}}\Big(\|e^{\langle D\rangle}U\|_{\tilde{L}^{\infty}_{T^*}(B^{\frac{1}{2}})}+
 \|e^{\langle D\rangle}U\|_{\tilde{L}^{\infty}_{T^{*}}(B^{1})}\Big)\\
 &
 +({\langle T^{*}\rangle} ^{\frac{3}{2}}-1)^{\frac{1}{2}}\|e^{\langle D\rangle}\partial_tU\|_{\tilde{L}^{\infty}_{T^{*}}(B^{\frac{1}{2}})}
 +({\langle T^{*}\rangle} ^{\frac{5}{2}}-1)^{\frac{1}{2}}\|e^{\langle D \rangle}U\|_{\tilde{L}^{\infty}_{T^{*}}(B^{\frac{1}{2}})}.
 \end{split}
 \end{equation*}
 \fi
 Moreover, we shall have the following blowup result:
\begin{theorem}\label{blowup}
For a given  $T>0$,  assume that the initial data and outer flow satisfy
\begin{equation}\label{Blowupcondition1}
u_0(0,y)=U(0,0)=0, ~U_{x}(t,0)\geq0, ~u_{0x}(0,y)\leq U_{x}(0,0)
\end{equation}
and there is $M>0$ depending on $T$ and $\|U_{x}(\cdot,0)\|_{L^{\infty}([0,T])}$ such that
\begin{equation}\label{Blowupcondition2}
\begin{split}
\int_{0}^{+\infty}\rho(y)\big(U_{x}(0,0)-u_{0x}(0,y)\big)dy >M
\end{split}
\end{equation}
for a weight function $\rho(y)$ given in \eqref{rho}, if the smooth solution $u\in C^3(Q_T)$ of \eqref{OGP}
satisfies
\begin{equation}\label{Blowupcondition3}
\lim \limits_{y \to +\infty}(u-U)e^{\psi}= 0 ~~{\rm and}~~ \lim \limits_{y \to +\infty}(u_x-U_x)e^{y} =0,
\end{equation}
 then the  $W^{1,\infty}-$norm of $u$ will blow up in a finite time, where the weight $\psi(t,y)$ is the same as given in  Theorem \ref{existence}.
 Moreover, the constant $M$ in the condition \eqref{Blowupcondition2} can be zero when the outflow velocity vanishes identically, $U(t,x)\equiv 0$.
\end{theorem}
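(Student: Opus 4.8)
The plan is to adapt the Lyapunov-functional / Riccati argument of \cite{Kuk-V-W} to the stagnation line $\{x=0\}$, on which I will show that the extra nonlocal term of \eqref{OGP} acts with a favorable sign. The conditions in \eqref{Blowupcondition1} are tailored to this line: since $u_0(0,y)=0$ and $U(0,0)=0$, and the structure of \eqref{OGP} is compatible with odd-in-$x$ symmetry, one has $u(t,0,y)=0$ and $U(t,0)=0$ for as long as the smooth solution persists. Differentiating the momentum equation of \eqref{OGP} in $x$ and evaluating at $x=0$, every term carrying an undifferentiated factor $u(t,0,y)$ or $\partial_y u(t,0,y)$ drops out; with $a(t,y):=u_x(t,0,y)$ and $v(t,0,y)=-\int_0^y a\,dy'$ this yields a closed scalar equation for $a$ in the single variable $y$. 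Passing to the deviation $b(t,y):=u_x(t,0,y)-U_x(t,0)$, which by \eqref{Blowupcondition1} satisfies $b(0,y)\le 0$, and which has boundary data $b(t,0)=-U_x(t,0)\le 0$ and, by \eqref{Blowupcondition3}, $b(t,y)\to 0$ as $y\to+\infty$, a direct computation gives
\[
\partial_t b-\partial_y^2 b+v\,\partial_y b+b^2+2U_x(t,0)\,b=\int_y^{+\infty}b\,dy',\qquad v=v(t,0,y).
\]

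The next step is a sign-preservation claim, namely that $b(t,y)\le 0$ is propagated in time. This is exactly where the nonlocal term helps rather than hurts: when $b\le 0$ its right-hand side is $\le 0$, while the reaction term $b\,(b+2U_x(t,0))$ is $\le 0$ near $b=0$ because $U_x(t,0)\ge 0$. A maximum-principle or energy estimate on $b_+$ then shows that $b$ cannot cross zero, so $b\le 0$ for all times the solution exists.

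With $b\le 0$ in hand I would introduce the Lyapunov functional
\[
m(t):=-\int_0^{+\infty}\rho(y)\,b(t,y)\,dy\ge 0,
\]
with $\rho$ the weight of \eqref{rho}, and compute $m'(t)$ by testing the $b$-equation against $-\rho$. Integration by parts in the diffusion and transport terms produces two copies of $\int_0^{+\infty}\rho\,b^2\,dy$ (the second from $-\int\rho\,v_y\,b$ with $v_y=-(b+U_x(t,0))$), and Cauchy--Schwarz gives the driving lower bound $\int_0^{+\infty}\rho\,b^2\,dy\ge m^2/\|\rho\|_{L^1}$. The damping $2U_x(t,0)\int\rho\,b=-2U_x(t,0)\,m$ is at most linear in $m$, and the nonlocal contribution, by Fubini, equals $-\int_0^{+\infty}b(y)\,R(y)\,dy\ge 0$ with $R(y)=\int_0^y\rho$, again favorable since $b\le 0$. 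The remaining boundary and remainder terms are where the precise choice of $\rho$ enters: I would require $\rho(0)=0$ (to kill the $y=0$ boundary terms, since $b(t,0)=-U_x(t,0)\ne 0$), sufficient decay at $+\infty$ to absorb the linear growth of $v$ against the exponential decay of $b$ from \eqref{Blowupcondition3}, and control of $\rho_{yy}$ so that the term $-\tfrac12\int\rho_{yy}B^2$ (with $B=\int_0^y b$) has the right sign. The outcome is a Riccati inequality $m'(t)\ge c\,m(t)^2-C_1\,m(t)-C_2$, with $c$ depending on $\|\rho\|_{L^1}$ and $C_1,C_2$ on $T$ and $\|U_x(\cdot,0)\|_{L^\infty([0,T])}$.

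Finally, choosing $M$ so that $c\mu^2-C_1\mu-C_2>0$ for $\mu\ge M$, the hypothesis \eqref{Blowupcondition2}, $m(0)>M$, forces $m$ to dominate the solution of $\mu'=\tfrac{c}{2}\mu^2$, hence $m(t)\to+\infty$ at some finite $T_*\le T$. Since $m(t)\le\|\rho\|_{L^1}\big(\|u_x(t,0,\cdot)\|_{L^\infty}+\|U_x(\cdot,0)\|_{L^\infty}\big)$, the blowup of $m$ forces the blowup of $\|u_x\|_{L^\infty}$, and a fortiori of the $W^{1,\infty}$-norm of $u$. When $U\equiv 0$ all the $U_x$-dependent damping and boundary terms vanish, leaving $m'(t)\ge c\,m(t)^2$, so every nonzero datum (i.e. $m(0)>0$) blows up and one may take $M=0$, as claimed. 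I expect the main obstacle to be the bookkeeping of the boundary and remainder terms---especially reconciling the nonlocal term and the linearly growing $v$ with the weight $\rho$---so as to keep the quadratic driver while bounding every correction by $C_1 m + C_2$; this is precisely the \emph{sensitive effect} of the integral term flagged in the introduction.
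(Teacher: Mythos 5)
Your overall strategy---restriction to the line $\{x=0\}$, propagation of a sign condition on $b=u_x(t,0,y)-U_x(t,0)$ (the paper works with $w=-b$), and the Kukavica--Vicol--Wang Lyapunov functional built on the weight $\rho$ of \eqref{rho}, leading to a Riccati inequality---is exactly the paper's, and your computation of the functional's derivative (the two copies of $\int\rho b^2$, the term $-\tfrac12\int\rho''(\partial_y^{-1}b)^2$, the favorable sign of the nonlocal contribution) matches the paper's estimate \eqref{bloupG}. But there is a genuine gap at your very first step: you justify $u(t,0,y)\equiv 0$ and $U(t,0)\equiv 0$ by ``odd-in-$x$ symmetry,'' whereas \eqref{Blowupcondition1} only prescribes values \emph{on} the line $x=0$; neither $u_0$ nor $U$ is assumed odd in $x$, so no symmetry argument applies. (Even the paper cannot derive $U(t,0)\equiv 0$; it is tacitly built into the reduced problem \eqref{OGP_Rx}, whose far field and forcing are written as zero.) What replaces symmetry in the paper is Lemma \ref{zeroSol}: the restriction $\bar u(t,y)=u(t,0,y)$ solves the closed problem \eqref{OGP_Rx} with zero data, and a weighted energy estimate (weight $e^{2\psi}$, needed to control both the nonlocal term and the linearly growing $\bar v$) plus Gronwall gives $\bar u\equiv 0$. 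That Gronwall argument requires the uniform bound $\|\partial_x u(t,0,y)\|_{L^{\infty}(H_T)}\le M_T$, which is \emph{not} a consequence of $u\in C^3(Q_T)$ on the unbounded domain $Q_T$; it is precisely the contradiction hypothesis \eqref{suppose0}, which your direct (non-contradiction) framing never introduces.

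The same omission undermines your sign-propagation step. The persistence of $b\le 0$ is the paper's non-negativity lemma for $w$, proved by a maximum principle on an unbounded domain with a drift growing linearly in $y$ and a nonlocal term: one passes to $V=we^{y-\lambda t}$, shifts to $V^{\ve}=V+\ve$, and at a first touching point absorbs the bad terms by taking $\lambda$ large, which uses the uniform bound $\|we^{y}\|_{L^{\infty}(H_T)}\le \bar M_T$ of \eqref{suppose}---itself a consequence of \eqref{suppose0} together with the second decay condition in \eqref{Blowupcondition3}. Your sketch (``the reaction term is $\le 0$ near $b=0$'') only yields $\partial_t b\le 0$ at a hypothetical touching point, which is not yet a contradiction; without the $\ve$-shift, the $\lambda$-damping, and the a priori bound, the argument cannot be closed. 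The repair is structural: argue by contradiction, assuming \eqref{suppose0}; then the reduction lemma, the maximum principle, and your Riccati argument all go through as in the paper, and blowup of $G(t)\le \|\rho\|_{L^1(\mathbb{R}_+)}\bigl(M_T+\|U_x(\cdot,0)\|_{L^{\infty}([0,T])}\bigr)$ before time $T$ gives the contradiction. A final minor point: the paper's weight is engineered (property (F3), $\rho(0)=0$, $\rho'(0)>0$ combined with $U_x(t,0)\ge 0$) so that \emph{no} zeroth-order term $-C_2$ appears in \eqref{bloupG}; keeping $C_2=0$ is what allows $M$ to depend only on $T$ and $\|U_x(\cdot,0)\|_{L^{\infty}([0,T])}$ and to take $M=0$ when $U\equiv 0$, so your weaker inequality $m'\ge c\,m^2-C_1m-C_2$ would need to be sharpened to recover the theorem as stated.
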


\begin{remark}
The exponential decay property \eqref{Blowupcondition3} and  the last inequality in the condition \eqref{Blowupcondition1} are imposed to deal with the integral term given in \eqref{OGP}. In particular, when $U(t,x)\equiv0$,  the blowup result can be obtained for any nonzero initial data satisfying the simplified version of \eqref{Blowupcondition1}
\begin{equation}\label{Blowupcondition1s}
u_0(0,y)=0, ~u_{0x}(0,y)\leq 0,
\end{equation}
 which differs from the result obtained in \cite{Kuk-V-W} for the classical Prandtl equation.
\end{remark}
The rest of this paper is organised as  follows:  In  section 2,  we apply the Littlewood-Palay theory to establish  the existence and uniqueness of the solution to the problem \eqref{homGP}. In section 3,  we analyze the blowup of a smooth solution to the problem \eqref{OGP} under the conditions \eqref{Blowupcondition1}, \eqref{Blowupcondition2} and \eqref{Blowupcondition3}.

\section{Local well-posedness}

First, let us recall some basic knowledge on the Littlewood-Paley  theory and introduce the function spaces, one can refer to \cite{Z-Z, Zhu-W} for the related definitions and properties.

Let $(\varphi,\chi)$ be smooth functions such that
$$\rm{supp}~\varphi \subset \Big\{\tau \in \mathbb{R}\big|\frac{3}{4}\leq|\tau|\leq\frac{8}{3}\Big\},~~\rm{supp}~\chi \subset \Big\{\tau\in \mathbb{R}\big| |\tau|\leq\frac{4}{3}\Big\}$$
satisfying
$$\sum_{k\in\mathbb{N}} \varphi(2^{-k}\tau)=1 \quad (\forall \tau\neq0),\qquad\quad
\chi(\tau)+\sum_{k\geq0}\varphi(2^{-k}\tau)=1\quad (\forall \tau\in\mathbb{R}).$$
For any given $f\in \mathcal{S}'(\mathbb{R}_x)$, denote by $S_kf=\mathcal{F}^{-1}[\chi(2^{-k}|\xi|)\mathcal{F}[f]]$, and
$$\Delta_k f=\begin{cases}
\mathcal{F}^{-1}[\varphi(2^{-k}|\xi|)\mathcal{F}[f]],~~k\geq 0,\cr
\mathcal{F}^{-1}[\chi(|\xi|)\mathcal{F}[f]],~~~~~~~k=-1,\cr
0,~~~~~~~~~~~~~~~~~~~~~~~~~~k\leq-2,
\end{cases}
$$
where $\mathcal{F}$ and $\mathcal{F}^{-1}$ denote the Fourier transform and the inverse Fourier transform in the $x$-variable.

Introduce the following function spaces with parameters $s>0$, $l\in \mathbb{N}_{+}$ and $p\in [1,+\infty]$.
\begin{definition}\label{def1}
(i) The space $B^{s}$  is the set of functions  $u\in \mathcal{S}'(\mathbb{R})$ such that
$$\|u\|_{B^{s}}:=\sum_{k\in \mathbb{Z}}2^{ks}\|\Delta_{k}u\|_{L^2(\mathbb{R})}<+\infty.$$
(ii) The space $B_{\psi}^{s,l}$, with a positive function $\psi(y)$, is the space of functions  $u\in \mathcal{S}'(\mathbb{R}^2_{+})$ such that
$$\|u\|_{B^{s,l}_{\psi}}:=\sum_{j=0}^{l}\sum_{k\in \mathbb{Z}}2^{ks}\|e^{\psi(y)}\Delta_{k}\partial^{j}_{y}u\|_{L^2(\mathbb{R}_x\times\mathbb{R}_{+})}<+\infty.$$
(iii) The space $\tilde{L}^{p}_{t}(B^{s})$ is  defined as the
 completion of $C([0,t];\mathcal{S}(\mathbb{R}))$ with the norm
$$\|u\|_{\tilde{L}^{p}_{t}(B^s)}:=
\sum_{k\in \mathbb{Z}}2^{ks}\big(\int_{0}^{t}
\|\Delta_{k}u(t',\cdot)\|^{p}_{L^2(\mathbb{R})}dt'\big)^{\frac{1}{p}}.$$
(iv) For any positive function $\psi(t',y)$ and nonnegative $f(t')\in L_{loc}^{1}(\mathbb{R}_{+})$, the space $\tilde{L}^{p}_{t,f}(B_{\psi}^{s,l})$ is  defined as the
 completion of $C([0,t];\mathcal{S}(\mathbb{R}^{2}_{+}))$ with the norm
$$\|u\|_{\tilde{L}^{p}_{t,f}(B^{s,l}_{\psi})}
:=\sum_{j=0}^{l}\sum_{k\in \mathbb{Z}}2^{ks}\big(\int_{0}^{t}
f(t')\|e^{\psi(t',y)}\Delta_{k}\partial^{j}_{y}u(t',\cdot)\|^{p}_{L^2(\mathbb{R}_x\times\mathbb{R}_{+})}dt'\big)^{\frac{1}{p}}.$$
Denote $\tilde{L}^{p}_{t,1}(B_{\psi}^{s,l})$ by $\tilde{L}^{p}_{t}(B_{\psi}^{s,l})$ for simplicity when $f(t')\equiv 1$. The above notations can be properly changed when $p=+\infty$.
\end{definition}

\subsection{Apriori estimates}

 \quad The main step is to establish apriori estimates of solutions to the problem \eqref{homGP}, and the existence of solutions can be obtained in a usual way by constructing approximate solution sequence through linearization and a proper iteration scheme, and proving the convergence of approximate solutions via the apriori estimates. Firstly, we shall study the apriori estimates for \eqref{homGP}.

 Similar to that given in \cite{Z-Z}, to obtain energy estimates for solutions of \eqref{homGP}, we introduce the weights
\begin{equation}\label{psi}
\psi(t,y)=\frac{1+y^{2}}{16(1+t)^{\gamma}}\quad \text{and}\quad  \psi_{0}(y)=\frac{1+y^{2}}{16},
\end{equation}
with  $\gamma\geq 2$.

Denote by $\hat{w}(t,\xi,\eta)$ the Fourier transform of $w(t,x,y)$ in the $x$-variable, and
$$
 w_{\Phi}(t,x,y)=\mathcal{F}^{-1}_{\xi\rightarrow x}[e^{\Phi(t,\xi)}\hat{w}(t,\xi,y)]$$
 for a given locally bounded function $\Phi(t,\xi)$.
 To deal with the loss of derivatives in the $x$-variable in \eqref{homGP}, for any given $\delta>0$,  we shall take
 \begin{equation}\label{Phi}
\Phi^{\delta}(t,\xi)=(\delta-\lambda \theta(t))\langle\xi\rangle
\end{equation}
with $\langle \xi\rangle=1+|\xi|$ and a parameter $\lambda$, in which $\theta(t)$ is mainly used to deal with enery estimates for the nonlinear terms and is determined by the following problem
\begin{equation}\label{theta}
\left\{
\begin{array}{l}
\dot{\theta}
={\langle t\rangle} ^{\frac{\gamma}{4}}\|\partial_{y}w_{\Phi}\|_{B_{\psi}^{\frac{1}{2},0}}
+{\langle t\rangle} ^{\frac{\gamma}{4}}\|U_{\Phi}\|_{B^{\frac{1}{2}}}
+{\langle t\rangle} ^{\frac{\gamma}{2}}\|w_{\Phi}\|^2_{B_{\psi}^{1,0}}
+\|w_{\Phi}\|^2_{B_{\psi}^{\frac{1}{2},0}}\\
~~~~~+{\langle t\rangle} ^{\frac{1}{2}}\|U_{\Phi}\|^2_{B^{\frac{1}{2}}}+{\langle t\rangle} ^{\frac{1}{2}}\|U_{\Phi}\|^2_{B^{1}}+{\langle t\rangle}^{\gamma},\\
\theta\big|_{t=0}=0,
\end{array}
\right.
\end{equation}
with
\begin{align}\label{onePhi}
 \Phi(t,\xi):=\Phi^{1}(t,\xi)=(1-\lambda \theta(t))\langle\xi\rangle.
\end{align}

 If $w(t,x,y)$ is a classical solution of the problem \eqref{homGP}, then  we know that $w_{\Phi}=\mathcal{F}^{-1}_{\xi\to x}[e^{\Phi(t,\xi)}\hat{w}(t,\xi,y)]$ satisfies the following equation
\begin{equation}\label{weighHomGP}
\begin{split}
&\partial_t w_{\Phi}
+\lambda\dot{\theta}\langle D\rangle w_{\Phi}
+[(w+u^s)\partial_{x}w]_{\Phi}
+[w\partial_{x}u^s]_{\Phi}\\
&~~~~~~~+\left[(-\int_{0}^{y}\partial_{x}(w+u^s) dy')\partial_{y}(w+w^s)\right]_{\Phi}
+\left[\int_{+\infty}^{y}w dy'\right]_{\Phi}\\
=&\partial_{y}^{2}w_{\Phi}
+(1-\phi)[\partial_t U+(1+\phi)U\partial_x U]_{\Phi}
-\int_{y}^{+\infty}U_{\Phi}(1-\phi)dy'
\end{split}
\end{equation}
with $\Phi(t,\xi)$ being given in \eqref{onePhi}.

Acting the dyadic operator $\Delta_{k}$ on \eqref{weighHomGP} and taking $L^2(Q_T)$ inner product with $e^{2\psi}\Delta_{k}w_{\Phi}$ for $\psi(t,y)$ given in \eqref{psi}, it follows
%\begin{equation}
\begin{align}\label{EnergyForm}
&\Big(e^{\psi}\partial_t \Delta_{k}w_{\Phi}\Big|e^{\psi}\Delta_{k}w_{\Phi}\Big)
 +\lambda\Big(\dot{\theta}\langle D\rangle e^{\psi}\Delta_{k}w_{\Phi}\Big|e^{\psi}\Delta_{k}w_{\Phi}\Big)
 -\Big(e^{\psi}\Delta_{k}\partial_{y}^{2}w_{\Phi}\Big|e^{\psi}\Delta_{k}w_{\Phi}\Big) \nonumber \\
 =&
-\Big(e^{\psi}\Delta_{k}[(w+u^s)\partial_{x}w]_{\Phi}\Big|e^{\psi}\Delta_{k}w_{\Phi}\Big)
-\Big(e^{\psi}\Delta_{k}[w\partial_{x}u^s]_{\Phi}\Big|e^{\psi}\Delta_{k}w_{\Phi}\Big) \nonumber \\
&+\Big(e^{\psi}\Delta_{k}[(\int_{0}^{y}\partial_{x}(w+u^s)
dy')\partial_{y}(w+u^s)]_{\Phi} \Big| e^{\psi}\Delta_{k}w_{\Phi}\Big)
-\Big(e^{\psi}\Delta_{k}[\int_{+\infty}^{y}w dy']_{\Phi}\Big|e^{\psi}\Delta_{k}w_{\Phi}\Big) \nonumber \\&
+\Big(e^{\psi}(1-\phi)\Delta_{k}[\partial_t U+(1+\phi)U\partial_x U]_{\Phi}\Big|e^{\psi}\Delta_{k}w_{\Phi}\Big)
 -\Big(e^{\psi}\int_{y}^{+\infty}\Delta_{k}U_{\Phi}(1-\phi)dy'\Big|e^{\psi}\Delta_{k}w_{\Phi}\Big) \nonumber \\
 :=&\sum_{i=1}^{6}J_i \nonumber,\\
\end{align}
where $(\cdot|\cdot)$ represents the inner product in $L^2(Q_T)$.

In the following calculation, for convenience we shall denote $a\leq Cb$ ($a\geq Cb$) by $a\lesssim b$ ($a\gtrsim b$), for a generic constant $C$ may change from line to line. Let us estimate each term given in \eqref{EnergyForm}.

First, the terms on the left hand side of \eqref{EnergyForm} can be estimated as follows:
\begin{align*}
(e^{\psi}\partial_t \Delta_{k}w_{\Phi}|e^{\psi}\Delta_{k}w_{\Phi})
=&\frac{1}{2}\|e^{\psi}\Delta_{k}w_{\Phi}(T,\cdot)\|^2_{L^2_+}
-\frac{1}{2}\|e^{\psi_{0}}\Delta_{k}e^{ \langle D\rangle}w_{0}\|^2_{L^2_+}
-(\psi_{t}e^{\psi}\Delta_{k}w_{\Phi}|e^{\psi}\Delta_{k}w_{\Phi}),\\
-(e^{\psi}\Delta_{k}\partial_{y}^{2}w_{\Phi}|e^{\psi}\Delta_{k}w_{\Phi})
=&(2\psi_{y}e^{\psi}\Delta_{k}\partial_{y}w_{\Phi}|e^{\psi}\Delta_{k}w_{\Phi})
+(e^{\psi}\Delta_{k}\partial_{y}w_{\Phi}|e^{\psi}\Delta_{k}\partial_{y}w_{\Phi})\\
\geq &\frac{1}{2}(e^{\psi}\Delta_{k}\partial_{y}w_{\Phi}|e^{\psi}\Delta_{k}\partial_{y}w_{\Phi})
-(2\psi^2_{y}e^{\psi}\Delta_{k}w_{\Phi}|e^{\psi}\Delta_{k}w_{\Phi})
\end{align*}
and
\begin{align*}
\lambda(\dot{\theta}\langle D\rangle e^{\psi}\Delta_{k}w_{\Phi}|e^{\psi}\Delta_{k}w_{\Phi})\gtrsim
(1+2^k) \lambda(\dot{\theta} e^{\psi}\Delta_{k}w_{\Phi}|e^{\psi}\Delta_{k}w_{\Phi}).
\end{align*}
 Thus, we get
\begin{lemma}\label{I_k}
If  we denote by
\begin{align*}
I(k)=&(e^{\psi}\partial_t \Delta_{k}w_{\Phi}|e^{\psi}\Delta_{k}w_{\Phi})
 +\lambda(\dot{\theta}\langle D\rangle e^{\psi}\Delta_{k}w_{\Phi}|e^{\psi}\Delta_{k}w_{\Phi})
 -(e^{\psi}\Delta_{k}\partial_{y}^{2}w_{\Phi}|e^{\psi}\Delta_{k}w_{\Phi})\\
 &+\frac{1}{2}\|e^{\psi_{0}}\Delta_{k}e^{\delta \langle D\rangle}w_{0}\|^2_{L^2_+},
\end{align*}
then one has
\begin{align*}
\sum_{k\in \mathbb{Z}}2^{\frac{k}{2}}\sqrt{I(k)}\gtrsim & \|w_{\Phi}\|_{\tilde{L}^{\infty}_{T}(B_{\psi}^{\frac{1}{2},0})}+
 \|\sqrt{-(\psi_t+2\psi_y^2)}w_{\Phi}\|_{\tilde{L}^{2}_{T}(B_{\psi}^{\frac{1}{2},0})}
 +\|\partial_yw_{\Phi}\|_{\tilde{L}^{2}_{T}(B_{\psi}^{\frac{1}{2},0})}\\
 &+\sqrt{\lambda}\big(\|w_{\Phi}\|_{\tilde{L}^{2}_{T,\dot{\theta}}(B_{\psi}^{\frac{1}{2},0})}
 +\|w_{\Phi}\|_{\tilde{L}^{2}_{T,\dot{\theta}}(B_{\psi}^{1,0})}
 \big).
\end{align*}
\end{lemma}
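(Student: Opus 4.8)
The plan is to assemble the three displayed identities preceding the statement, which already diagonalise the left-hand side of \eqref{EnergyForm} into a boundary term at $t=T$, the dissipation term $\tfrac12\|e^\psi\Delta_k\partial_y w_\Phi\|^2$, the weight terms carrying $-(\psi_t+2\psi_y^2)$, and the $\lambda\dot\theta$-penalisation. First I would use the data term $\tfrac12\|e^{\psi_0}\Delta_k e^{\langle D\rangle}w_0\|^2_{L^2_+}$ built into $I(k)$ to cancel exactly the $-\tfrac12\|e^{\psi_0}\Delta_k e^{\langle D\rangle}w_0\|^2_{L^2_+}$ coming from the $\partial_t$ computation. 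Combining the $-(\psi_t\,\cdots)$ from the time integration by parts with the $-(2\psi_y^2\,\cdots)$ produced by the $y$-integration by parts gives the single term $\|\sqrt{-(\psi_t+2\psi_y^2)}\,e^\psi\Delta_k w_\Phi\|^2_{L^2(Q_T)}$, while the Bernstein-type lower bound $\langle D\rangle\Delta_k\simeq(1+2^k)\Delta_k$ (valid since $e^\psi$ depends only on $(t,y)$ and hence commutes with $\langle D\rangle$) turns the penalisation into $c\lambda(1+2^k)\int_0^T\dot\theta\|e^\psi\Delta_k w_\Phi\|^2_{L^2_+}dt$. Thus for each $k$ one obtains
\[
I(k)\ge \tfrac12\|e^\psi\Delta_k w_\Phi(T)\|^2_{L^2_+}+\|\sqrt{-(\psi_t+2\psi_y^2)}\,e^\psi\Delta_k w_\Phi\|^2_{L^2(Q_T)}+\tfrac12\|e^\psi\Delta_k\partial_y w_\Phi\|^2_{L^2(Q_T)}+c\lambda(1+2^k)\!\int_0^T\!\dot\theta\|e^\psi\Delta_k w_\Phi\|^2_{L^2_+}dt.
\]

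The one genuinely analytic point is to verify that the weight combination has a sign, i.e.\ $\psi_t+2\psi_y^2\le0$, so that the second term is a well-defined nonnegative quantity. For the weight \eqref{psi} one computes $\psi_t=-\tfrac{\gamma}{1+t}\psi$ and $\psi_y^2=\tfrac{y^2}{64(1+t)^{2\gamma}}$, so the inequality $\psi_t+2\psi_y^2\le0$ reduces to $2\gamma(1+y^2)(1+t)^{\gamma-1}\ge y^2$ for all $y\ge0$, $t\ge0$; since $2\gamma(1+y^2)\ge 4y^2$ and $(1+t)^{\gamma-1}\ge1$, this holds precisely because of the constraint $\gamma\ge2$ imposed in \eqref{psi}.

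It then remains to pass from the pointwise-in-$k$ lower bound to the stated norm inequality. Using the elementary bound $\sqrt{a_1+a_2+a_3+a_4}\ge\tfrac12(\sqrt{a_1}+\sqrt{a_2}+\sqrt{a_3}+\sqrt{a_4})$ for nonnegative $a_i$, I would take square roots, multiply by $2^{k/2}$ and sum over $k\in\mathbb{Z}$. The first three sums reproduce directly, through Definition \ref{def1}, the $B^{\frac12,0}_\psi$-norm of $w_\Phi$ at time $T$, the $\tilde L^2_T(B^{\frac12,0}_\psi)$-norm of $\sqrt{-(\psi_t+2\psi_y^2)}\,w_\Phi$, and $\|\partial_y w_\Phi\|_{\tilde L^2_T(B^{\frac12,0}_\psi)}$. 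For the last sum one uses $2^{k/2}\sqrt{(1+2^k)\lambda}\simeq\sqrt\lambda\,(2^{k/2}+2^k)$, which splits the $\dot\theta$-weighted contribution into the two regularity levels $\tfrac12$ and $1$, giving $\sqrt\lambda\big(\|w_\Phi\|_{\tilde L^2_{T,\dot\theta}(B^{\frac12,0}_\psi)}+\|w_\Phi\|_{\tilde L^2_{T,\dot\theta}(B^{1,0}_\psi)}\big)$.

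The only bookkeeping subtlety, and the step I expect to need the most care, is that the boundary term furnished by the $\partial_t$ integration by parts is a priori the value at the fixed endpoint $t=T$, whereas the first target norm is the $\tilde L^\infty_T$ (supremum-in-time) norm. This is handled by noting that the entire computation remains valid with $T$ replaced by any $t^*\in[0,T]$ — the three remaining integral terms over $Q_{t^*}$ being nonnegative, and the right-hand side $\sum_i J_i$ of \eqref{EnergyForm} being estimated by quantities monotone in the time horizon — so that taking the supremum over $t^*$ upgrades $\|e^\psi\Delta_k w_\Phi(T)\|_{L^2_+}$ to $\sup_{t\le T}\|e^\psi\Delta_k w_\Phi(t)\|_{L^2_+}$ and yields the full $\|w_\Phi\|_{\tilde L^\infty_T(B^{\frac12,0}_\psi)}$.
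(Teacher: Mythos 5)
Your proposal is correct and follows essentially the same route as the paper, whose entire proof consists of the three displayed identities/inequalities that you assemble. The points you elaborate --- the sign check $\psi_t+2\psi_y^2\le 0$ for $\gamma\ge 2$, the splitting $2^{k/2}\sqrt{\lambda(1+2^k)}\simeq \sqrt{\lambda}\,\bigl(2^{k/2}+2^{k}\bigr)$, and the upgrade of the endpoint term $\|e^{\psi}\Delta_k w_{\Phi}(T,\cdot)\|_{L^2_+}$ to the supremum-in-time norm $\tilde{L}^{\infty}_{T}(B^{\frac{1}{2},0}_{\psi})$ by running the energy identity on every subinterval $[0,t^*]$ --- are exactly the details the paper leaves implicit, and you handle each of them correctly.
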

Now it remains to control the right hand side of \eqref{EnergyForm} term by term.
We shall mainly study the estimates for $J_4$ and $J_6$, and the following result for other terms
 was obtained in \cite{Zhu-W}.
\begin{lemma} (\cite[pp.~8-14]{Zhu-W})\label{J_i}
For any given $\sigma>0$, there is a constant $C_{\sigma}>0$ such that the terms $J_1$, $J_{2}$, $J_3$ and $J_5$ given in \eqref{EnergyForm} can be bounded as follows:
\begin{align*}%\label{J1}
\sum_{k\in \mathbb{Z}}2^{\frac{k}{2}}\sqrt{|J_1(k)|}\lesssim&
C_{\sigma}\|w_{\Phi}\|_{\tilde{L}^{2}_{T,\dot{\theta}}(B_{\psi}^{1,0})}
+\sigma\|\partial_{y}w_{\Phi}\|_{\tilde{L}^{2}_{T}(B_{\psi}^{\frac{1}{2},0})}
+\sigma T^{\frac{1}{2}}\|U_{\Phi}\|_{\tilde{L}^{\infty}_{T}(B^{\frac{1}{2}})},\\
\sum_{k\in \mathbb{Z}}2^{\frac{k}{2}}\sqrt{|J_2(k)|}\lesssim&
\|w_{\Phi}\|_{\tilde{L}^{2}_{T,\dot{\theta}}(B_{\psi}^{1,0})}
+\sigma T^{\frac{1}{2}}\|U_{\Phi}\|_{\tilde{L}^{\infty}_{T}(B^{1})},\\
\sum_{k\in \mathbb{Z}}2^{\frac{k}{2}}\sqrt{|J_3(k)|}\lesssim &
C_{\sigma}\|w_{\Phi}\|_{\tilde{L}^{2}_{T,\dot{\theta}}(B_{\psi}^{1,0})}
+C_{\sigma}\|U_{\Phi}\|_{\tilde{L}^{\infty}_{T}(B^{\frac{3}{2}})}\|yw_{\Phi}\|_{\tilde{L}^{2}_{T}(B_{\psi}^{\frac{1}{2},0})}
+\sigma\|\partial_{y}w_{\Phi}\|_{\tilde{L}^{2}_{T}(B_{\psi}^{\frac{1}{2},0})}\\
&+\sigma T^{\frac{1}{2}}\|U_{\Phi}\|_{\tilde{L}^{\infty}_{T}(B^{\frac{1}{2}})}\|U_{\Phi}\|_{\tilde{L}^{\infty}_{T}(B^{\frac{3}{2}})}
+\sigma T^{\frac{1}{2}}\|U_{\Phi}\|_{\tilde{L}^{\infty}_{T}(B^{\frac{1}{2}})}
\end{align*}
and
\begin{align*}
\sum_{k\in \mathbb{Z}}2^{\frac{k}{2}}\sqrt{|J_5(k)|}\lesssim &
\|w_{\Phi}\|_{\tilde{L}^{2}_{T,\dot{\theta}}(B_{\psi}^{1,0})}
+ T^{\frac{1}{2}}\big(\|U_{\Phi}\|_{\tilde{L}^{\infty}_{T}(B^{\frac{1}{2}})}+
\|U_{\Phi}\|_{\tilde{L}^{\infty}_{T}(B^{1})}\big)
+\|w_{\Phi}\|_{\tilde{L}^{2}_{T}(B_{\psi}^{\frac{1}{2},0})}\\
&+ (T^{\frac{3}{2}}-1)^{\frac{1}{2}}\|\partial_{t}U_{\Phi}\|_{\tilde{L}^{\infty}_{T}(B^{\frac{1}{2}})}.
\end{align*}
\end{lemma}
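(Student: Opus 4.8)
The plan is to estimate $J_1, J_2, J_3, J_5$ in \eqref{EnergyForm} by the mechanism underlying the analytic well-posedness theory of the Prandtl equation, since (unlike $J_4$ and $J_6$) these terms carry no contribution from the new integral term and coincide structurally with those appearing in \cite{Zhu-W,Z-Z}. The governing idea is that the analytic weight $e^{\Phi(t,\xi)}$ with the shrinking radius $1-\lambda\theta(t)$ in \eqref{onePhi} converts the loss of one horizontal derivative in each convective term into a factor absorbed by the good term $\lambda(\dot{\theta}\langle D\rangle e^{\psi}\Delta_k w_{\Phi}\,|\,e^{\psi}\Delta_k w_{\Phi})$ on the left of \eqref{EnergyForm}; by Lemma \ref{I_k} this good term controls $\sqrt{\lambda}\,\|w_{\Phi}\|_{\tilde{L}^2_{T,\dot{\theta}}(B^{1,0}_{\psi})}$, which is exactly the quantity appearing on the right-hand sides claimed above.

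First I would strip the weight operator from the products. Since $1-\lambda\theta(t)\ge 0$ on the time interval under consideration and $\langle\xi\rangle\le\langle\xi-\zeta\rangle+\langle\zeta\rangle$, the symbol $e^{\Phi(t,\xi)}=e^{(1-\lambda\theta)\langle\xi\rangle}$ is subadditive along convolution, so $e^{\Phi(t,\xi)}\le e^{\Phi(t,\xi-\zeta)}e^{\Phi(t,\zeta)}$. This pointwise bound on the Fourier side lets me replace each $[fg]_{\Phi}$ by a bilinear expression in $f_{\Phi}$ and $g_{\Phi}$ with a nonnegative convolution kernel, so that every estimate reduces to one for genuine products in the weighted Chemin-Lerner norms. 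I would then apply Bony's paraproduct decomposition in $x$, $fg=T_f g+T_g f+R(f,g)$, localize with $\Delta_k$, and use Bernstein's inequality together with the definition of $B^{s,l}_{\psi}$ in Definition \ref{def1} to bound each dyadic block; the weight $e^{\psi}$, with $\psi$ quadratic in $y$ as in \eqref{psi}, passes through these manipulations because $\psi$ is independent of $x$.

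With this machinery in place the individual terms follow by distributing derivatives. In $J_1$ I would split $(w+u^s)\partial_x w$ into the nonlinear part $w\partial_x w$, whose derivative loss is charged to $\|w_{\Phi}\|_{\tilde{L}^2_{T,\dot{\theta}}(B^{1,0}_{\psi})}$ while the low-frequency factor is kept in $B^{\frac12}$, and the part $u^s\partial_x w=U\phi\,\partial_x w$, which produces the source contribution $\sigma T^{\frac12}\|U_{\Phi}\|_{\tilde{L}^{\infty}_T(B^{\frac12})}$; a Young inequality peels off a small multiple of the dissipation $\|\partial_y w_{\Phi}\|_{\tilde{L}^2_T(B^{\frac12,0}_{\psi})}$ to be absorbed on the left. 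The term $J_2$, coming from $w\partial_x u^s=w\phi\,\partial_x U$, is the most direct and costs only $\|w_{\Phi}\|_{\tilde{L}^2_{T,\dot{\theta}}(B^{1,0}_{\psi})}$ and $\|U_{\Phi}\|_{\tilde{L}^{\infty}_T(B^{1})}$. For $J_5$ the factor $(1-\phi)$ decays in $y$ and is integrable against $e^{\psi}$, so standard product estimates give the $U$-norms and the $\|\partial_t U_{\Phi}\|_{\tilde{L}^{\infty}_T(B^{\frac12})}$ contribution, with the time weight $(T^{\frac32}-1)^{\frac12}$ arising from integrating the $t$-dependence of $\phi$ and of the background.

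The genuinely delicate term is $J_3$, which originates from the vertical transport with $v=-\int_0^y\partial_x(w+u^s)\,dy'$. Here the horizontal derivative sits under a vertical integral, so a naive estimate loses both a derivative in $x$ and growth in $y$. The hard part will be controlling this growth: the antiderivative in $y$ against the Gaussian-type weight $e^{\psi}$ forces the factor $\|y w_{\Phi}\|_{\tilde{L}^2_T(B^{\frac12,0}_{\psi})}$, which I would reabsorb using the left-hand term $\|\sqrt{-(\psi_t+2\psi_y^2)}\,w_{\Phi}\|_{\tilde{L}^2_T(B^{\frac12,0}_{\psi})}$ from Lemma \ref{I_k}, since $-(\psi_t+2\psi_y^2)\gtrsim y^2(1+t)^{-\gamma-1}$ for $\gamma\ge 2$ by \eqref{psi}; the remaining derivative loss is again charged to $\|w_{\Phi}\|_{\tilde{L}^2_{T,\dot{\theta}}(B^{1,0}_{\psi})}$, while the $U$-dependence enters through $\partial_x u^s$ and $\partial_y u^s=U\partial_y\phi$, producing the $\|U_{\Phi}\|_{\tilde{L}^{\infty}_T(B^{\frac32})}$ factor. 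Throughout, the decisive point is that $\dot{\theta}$ in \eqref{theta} has been designed to contain precisely the norms $\|\partial_y w_{\Phi}\|_{B^{\frac12,0}_{\psi}}$, $\|U_{\Phi}\|_{B^{\frac12}}$, $\|w_{\Phi}\|^2_{B^{1,0}_{\psi}}$ and so on, that multiply each lost derivative, so that every derivative-losing contribution is matched by a term already present in the good quantity $\sqrt{\lambda}\,\|w_{\Phi}\|_{\tilde{L}^2_{T,\dot{\theta}}(B^{1,0}_{\psi})}$ and the estimates close.
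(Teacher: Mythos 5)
Your plan follows essentially the same route as the paper: the paper does not reprove these bounds but cites \cite[pp.~8--14]{Zhu-W}, and the argument there (also reflected in the paper's suppressed computations) is exactly what you describe --- subadditivity of the analytic weight $e^{\Phi(t,\xi)}$ on the Fourier side, Bony paraproduct decomposition of each dyadic block in the weighted Chemin--Lerner spaces, Bernstein-type estimates, and absorption of the lost horizontal derivative into the $\dot{\theta}$-weighted norm $\|w_{\Phi}\|_{\tilde{L}^{2}_{T,\dot{\theta}}(B_{\psi}^{1,0})}$, with the vertical-transport term $J_3$ generating the $\|U_{\Phi}\|_{\tilde{L}^{\infty}_{T}(B^{\frac{3}{2}})}\|yw_{\Phi}\|_{\tilde{L}^{2}_{T}(B_{\psi}^{\frac{1}{2},0})}$ contribution. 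The only cosmetic difference is that you fold the reabsorption of $\|yw_{\Phi}\|_{\tilde{L}^{2}_{T}(B_{\psi}^{\frac{1}{2},0})}$ via $-(\psi_t+2\psi_y^2)\gtrsim y^{2}(1+t)^{-\gamma-1}$ into the lemma itself, whereas the paper keeps that term on the right-hand side of the $J_3$ bound and performs the absorption later, in the proof of Theorem \ref{apriori}.
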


\begin{lemma}\label{J46}
For $J_4(k)$ and $J_{6}(k)$ given in \eqref{EnergyForm}, we have
\begin{align}\label{J4}
\sum_{k\in \mathbb{Z}}2^{\frac{k}{2}}\sqrt{|J_4(k)|}\lesssim
 \|yw_{\Phi}\|_{\tilde{L}^{2}_{T}(B_{\psi}^{\frac{1}{2},0})}
+\|w_{\Phi}\|_{\tilde{L}^{2}_{T,\dot{\theta}}(B_{\psi}^{\frac{1}{2},0})}
\end{align}
and
\begin{align}\label{J6}
\sum_{k\in \mathbb{Z}}2^{\frac{k}{2}}\sqrt{|J_6(k)|}\lesssim
 ({\langle T\rangle} ^{\frac{5}{2}}-1)^{\frac{1}{2}}\|U_{\Phi}\|_{\tilde{L}^{\infty}_{T}(B_{\psi}^{\frac{1}{2},0})}
+\|w_{\Phi}\|_{\tilde{L}^{2}_{T}(B_{\psi}^{\frac{1}{2},0})}.
\end{align}
\end{lemma}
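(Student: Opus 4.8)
The plan is to handle $J_4$ and $J_6$ by the same two–step scheme: first peel off the vertical integral with a Cauchy–Schwarz inequality in the weighted $L^2(\mathbb{R}_+)$ norm, reducing everything to a one–dimensional estimate in $y$; then reassemble the dyadic blocks by Cauchy–Schwarz in $t$ and $k$ together with the elementary bound $\sqrt{XY}\le\frac12(X+Y)$. Throughout one uses that $\Delta_k$ and the multiplier $[\,\cdot\,]_\Phi$ act only in $x$, hence commute with integration in $y'$. The decisive simplification for $J_6$ is that $U=U(t,x)$ is independent of the normal variable, so that $\Delta_k[\int_y^{+\infty}U_\Phi(1-\phi)\,dy']=\Delta_kU_\Phi(t,x)\,\Theta(t,y)$ with
\begin{equation*}
\Theta(t,y):=\int_{y}^{+\infty}\bigl(1-\phi(t,y')\bigr)\,dy',
\end{equation*}
so that the whole $y$–dependence is carried by the explicit profile $\Theta$.

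For $J_4$, set $\tilde h_k(t,x,y)=\int_{y}^{+\infty}\Delta_kw_\Phi\,dy'$; since $\int_{+\infty}^{y}=-\int_y^{+\infty}$ the two minus signs cancel and $J_4(k)=\bigl(e^{\psi}\tilde h_k\bigm|e^{\psi}\Delta_kw_\Phi\bigr)_{L^2(Q_T)}$, so Cauchy–Schwarz in $(x,y)$ gives $|J_4(k)|\le\int_0^T\|e^{\psi}\tilde h_k\|_{L^2_+}\,\|e^{\psi}\Delta_kw_\Phi\|_{L^2_+}\,dt$. The heart of the matter is the weighted Hardy inequality
\begin{equation*}
\Bigl\|e^{\psi}\!\int_{y}^{+\infty}\!g\,dy'\Bigr\|_{L^2(\mathbb{R}_+)}\lesssim\bigl\|(1+y)\,e^{\psi}g\bigr\|_{L^2(\mathbb{R}_+)},
\end{equation*}
with a constant \emph{independent of $t$}; granting it with $g=\Delta_kw_\Phi$ and setting $P_k=(\int_0^T\|e^{\psi}\Delta_kw_\Phi\|_{L^2_+}^2dt)^{1/2}$, $Q_k=(\int_0^T\|ye^{\psi}\Delta_kw_\Phi\|_{L^2_+}^2dt)^{1/2}$, a Cauchy–Schwarz/Minkowski estimate in $t$ yields $|J_4(k)|\lesssim(P_k+Q_k)P_k$, whence $2^{k/2}\sqrt{|J_4(k)|}\lesssim2^{k/2}(P_k+Q_k)$. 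Summation in $k$ then gives $\sum_k2^{k/2}\sqrt{|J_4(k)|}\lesssim\|w_\Phi\|_{\tilde{L}^2_T(B^{\frac12,0}_\psi)}+\|yw_\Phi\|_{\tilde{L}^2_T(B^{\frac12,0}_\psi)}$, and since by \eqref{theta} one has $\dot\theta\ge\langle t\rangle^{\gamma}\ge1$, the first norm is dominated by $\|w_\Phi\|_{\tilde{L}^2_{T,\dot\theta}(B^{\frac12,0}_\psi)}$, which is \eqref{J4}.

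For $J_6$, the factorization above and Cauchy–Schwarz in $y$ then $x$ give
\begin{equation*}
|J_6(k)|\le\int_0^T\|e^{\psi}\Theta\|_{L^2_y}\,\|\Delta_kU_\Phi\|_{L^2_x}\,\|e^{\psi}\Delta_kw_\Phi\|_{L^2_+}\,dt.
\end{equation*}
Since $\phi(t,y)=\mathrm{Erf}\bigl(y/(2\sqrt{1+t})\bigr)$, the substitution $\zeta=y/(2\sqrt{1+t})$ gives $\Theta(t,y)=2\sqrt{1+t}\,E(\zeta)$ with $E(\zeta)=\int_\zeta^{+\infty}\mathrm{Erfc}\sim e^{-\zeta^2}$, and the same substitution turns the weight $e^{2\psi}$ into the factor $\exp\bigl(\zeta^2/(2\langle t\rangle^{\gamma-1})\bigr)$; because $\gamma\ge2$ this is beaten by the $e^{-2\zeta^2}$ decay of $E^2$, so the $\zeta$–integral is bounded uniformly in $t$ and $\|e^{\psi}\Theta\|_{L^2_y}\lesssim\langle t\rangle^{3/4}$. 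Bounding $\|\Delta_kU_\Phi\|_{L^2_x}$ by its $\tilde{L}^\infty_T$ value and applying Cauchy–Schwarz in $t$ with $\int_0^T\langle t\rangle^{3/2}\,dt\sim\langle T\rangle^{5/2}-1$ gives $|J_6(k)|\lesssim(\langle T\rangle^{5/2}-1)^{1/2}\,\|\Delta_kU_\Phi\|_{\tilde{L}^\infty_TL^2_x}\,(\int_0^T\|e^{\psi}\Delta_kw_\Phi\|_{L^2_+}^2dt)^{1/2}$; multiplying by $2^{k/2}$, summing in $k$ by Cauchy–Schwarz and using $\sqrt{XY}\le\frac12(X+Y)$ produces the right-hand side of \eqref{J6}.

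The main obstacle is the uniform-in-$t$ weighted Hardy inequality underlying $J_4$. The vertical integral $\int_y^{+\infty}$ is the \emph{adjoint} Hardy operator, for which the naive Cauchy–Schwarz against $e^{-2\psi}$ produces a logarithmically divergent constant (the integrand $e^{2\psi(y)}\int_y^{+\infty}e^{-2\psi}\,dy'$ decays only like $1/y$ as $y\to+\infty$), so the $(1+y)$–weight on the right is genuinely needed. Its validity, uniformly in $t$, is most cleanly checked through Muckenhoupt's criterion $\sup_{r>0}\bigl(\int_0^re^{2\psi}\bigr)\bigl(\int_r^{+\infty}(1+y)^{-2}e^{-2\psi}\bigr)<\infty$, the uniformity following from the scaling $y\mapsto y\langle t\rangle^{\gamma/2}$ that normalizes $\psi$. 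A secondary technical point is the sharp growth rate $\|e^{\psi}\Theta\|_{L^2_y}\lesssim\langle t\rangle^{3/4}$, which is exactly what fixes the exponent $5/2$ appearing in \eqref{J6}.
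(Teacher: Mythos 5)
Your treatment of $J_6$ is essentially the paper's own proof: both of you factor $\Delta_k U_\Phi(t,x)$ out of the vertical integral, apply Cauchy--Schwarz in $y$, then in $x$, then in $t$, and invoke the profile bound $\|e^{\psi}\int_y^{+\infty}(1-\phi)\,dy'\|_{L^2_y}\lesssim \langle t\rangle^{3/4}$ together with $\int_0^T\langle t\rangle^{3/2}dt\sim \langle T\rangle^{5/2}-1$; the paper merely states that profile bound, whereas you verify it by the substitution $\zeta=y/(2\sqrt{1+t})$, which is a welcome addition (and your computation shows the norm on the right of \eqref{J6} should really be read as $\|U_\Phi\|_{\tilde{L}^{\infty}_{T}(B^{\frac{1}{2}})}$, consistent with Theorem \ref{apriori}). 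For $J_4$, however, your route is genuinely different. The paper applies Cauchy--Schwarz in $y'$ against $e^{-2\psi}$, uses the pointwise bound $e^{2\psi(y)}\int_y^{+\infty}e^{-2\psi}dy'\lesssim\langle t\rangle^{\gamma/2}$, gains the weight $y$ by Fubini, and then absorbs the resulting growth factors $\langle t\rangle^{\gamma/2}+\langle t\rangle^{\gamma}$ via Young's inequality into the $\dot\theta$-weighted norm --- this is exactly why the term $\langle t\rangle^{\gamma}$ is built into the definition \eqref{theta} of $\dot\theta$. You instead prove a $t$-uniform weighted Hardy inequality $\|e^{\psi}\int_y^{+\infty}g\,dy'\|_{L^2(\mathbb{R}_+)}\lesssim\|(1+y)e^{\psi}g\|_{L^2(\mathbb{R}_+)}$, after which only the trivial bound $\dot\theta\ge\langle t\rangle^{\gamma}\ge 1$ is needed to pass from $\tilde{L}^2_T$ to $\tilde{L}^2_{T,\dot\theta}$. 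Your version buys cleaner bookkeeping (no growing time factors at all) at the price of a Muckenhoupt verification; the paper's version is more pedestrian but uses only elementary pointwise and Fubini estimates.

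One point in your argument needs repair, though the conclusion survives: the claim that the uniformity in $t$ of the Hardy constant ``follows from the scaling $y\mapsto y\langle t\rangle^{\gamma/2}$'' is not correct as stated, because the weight $(1+y)$ is not scale-invariant. Writing $L=\langle t\rangle^{\gamma/2}$ and rescaling, the Muckenhoupt product becomes $L^{2}\bigl(\int_0^{\tilde r}e^{\tilde y^{2}/8}d\tilde y\bigr)\bigl(\int_{\tilde r}^{\infty}(1+L\tilde y)^{-2}e^{-\tilde y^{2}/8}d\tilde y\bigr)$, so the $L$-dependence does not simply drop out. It can be removed either by bounding $(1+L\tilde y)^{-2}\le L^{-2}\tilde y^{-2}$, which reduces matters to the fixed finite supremum $\sup_{\tilde r>0}\bigl(\int_0^{\tilde r}e^{\tilde y^{2}/8}d\tilde y\bigr)\bigl(\int_{\tilde r}^{\infty}\tilde y^{-2}e^{-\tilde y^{2}/8}d\tilde y\bigr)$, or by checking the criterion directly in the two regimes: for $r\le L$ the product is $\lesssim r/(1+r)\le 1$, and for $r>L$ Gaussian tail estimates give $\lesssim L^{4}/r^{4}\le 1$. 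With either fix your proof of \eqref{J4} is complete.
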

\begin{proof}
The term $J_4$ given in \eqref{EnergyForm} can be controlled as follows:
\begin{align*}
|J_4(k)|&=|(e^{\psi}\Delta_{k}[\int_{+\infty}^{y}w dy']_{\Phi}|e^{\psi}\Delta_{k}w_{\Phi})|\\
%&\leq\int_{0}^{T}
%\|e^{\psi}\Delta_{k}w_{\Phi}\|_{L^2_{+}}\|e^{\psi}[\int_{+\infty}^{y}w dy']_{\Phi}\|_{L^2_{+}} dt\\
%&=\int_{0}^{T}
%\|e^{\psi}\Delta_{k}w_{\Phi}\|_{L^2_{+}}
%\Big(\int_{\mathbb{R}_+}dy\int_{\mathbb{R}}dx
%\big(e^{\psi}\int_{+\infty}^{y}\Delta_{k}w _{\Phi}dy'\big)^{2}\Big)^{\frac{1}{2}} dt\\
&\leq \int_{0}^{T}
\|e^{\psi}\Delta_{k}w_{\Phi}\|_{L^2_{+}}\Big(\int_{\mathbb{R}_+}dy\int_{\mathbb{R}}dx
\big (e^{2\psi}\int_{+\infty}^{y}e^{2\psi}(\Delta_{k}w _{\Phi})^2dy'
\int_{+\infty}^{y}e^{-2\psi}dy'\big)\Big)^{\frac{1}{2}} dt\\
&\lesssim \int_{0}^{T}
\langle t\rangle^{\frac{\gamma}{2}}\|e^{\psi}\Delta_{k}w_{\Phi}\|_{L^2_{+}}\Big(\int_{\mathbb{R}_+}dy\int_{\mathbb{R}}dx
\int_{+\infty}^{y}e^{2\psi}(\Delta_{k}w _{\Phi})^2dy'
\Big)^{\frac{1}{2}} dt\\
%&=\int_{0}^{T}
%\langle t\rangle^{\frac{\gamma}{2}}\|\sqrt{y}e^{\psi}\Delta_{k}w_{\Phi}\|_{L^2_{+}}
%\|e^{\psi}\Delta_{k}w_{\Phi}\|_{L^2_{+}} dt\\
&\leq\int_{0}^{T}
\langle t\rangle^{\frac{\gamma}{2}}(\|ye^{\psi}\Delta_{k}w_{\Phi}\|_{L^2_{+}}+\|e^{\psi}\Delta_{k}w_{\Phi}\|_{L^2_{+}})
\|e^{\psi}\Delta_{k}w_{\Phi}\|_{L^2_{+}} dt  \\
&\leq\frac{1}{2}\int_{0}^{T}
\|ye^{\psi}\Delta_{k}w_{\Phi}\|^2_{L^2_{+}}dt+\int_{0}^{T} (\langle t\rangle^{\frac{\gamma}{2}}+ \frac{1}{2}\langle t\rangle^{\gamma})
\|e^{\psi}\Delta_{k}w_{\Phi}\|^2_{L^2_{+}} dt,
\end{align*}
which implies the estimate \eqref{J4} immediately by using
the definition of $\tilde{L}^{p}_{T}(B_{\psi}^{\frac{1}{2},0})$,
$\tilde{L}^{p}_{T,\dot{\theta}}(B_{\psi}^{\frac{1}{2},0})$ and $\dot{\theta}$.

For  the terms $J_6$, it yields
\begin{align*}
|J_6(k)|&=|(e^{\psi}\int_{y}^{+\infty}\Delta_{k}U_{\Phi}(1-\phi)dy'|e^{\psi}\Delta_{k}w_{\Phi})|\\
&\lesssim \int_{0}^{T}\int_{\mathbb{R}_+}e^{\psi}\int_{y}^{+\infty}(1-\phi)dy'\|\Delta_{k}U_{\Phi}\|_{L^2_x}
\|e^{\psi}\Delta_{k}w_{\Phi}\|_{L^2_x}dydt\\
&\lesssim \int_{0}^{T}\|e^{\psi}\int_{y}^{+\infty}(1-\phi)dy'\|_{L^2_{y}}\|\Delta_{k}U_{\Phi}\|_{L^2_x}
\|e^{\psi}\Delta_{k}w_{\Phi}\|_{L^2_{+}}dt\\
&\lesssim
({\langle T\rangle} ^{\frac{5}{2}}-1)^{\frac{1}{2}}\|\Delta_{k}U_{\Phi}\|_{L^{\infty}_{t}L^2_{x}}(\int_{0}^{T}
\|e^{\psi}\Delta_{k}w_{\Phi}\|^2_{L^2_{+}}
dt)^{\frac{1}{2}},
\end{align*}
where $\|e^{\psi}\int_{y}^{\infty}(1-\phi)dy'\|_{L^2_{y}}\lesssim {\langle t\rangle} ^{\frac{3}{4}} $ has been used. By using the Cauchy inequality, we conclude the estimate \eqref{J6}.
\end{proof}

Now the apriori estimates are given in the following theorem.
\begin{theorem}\label{apriori}
 Suppose that $w(t,x,y)$ is a classical solution of the problem \eqref{homGP}, then there exist $T_2>0$ and a positive constant $G$ such that there holds
\begin{equation}\label{EnergyFinal}
 \begin{split}
 &\|w_{\Phi}\|_{\tilde{L}^{\infty}_{T}(B_{\psi}^{\frac{1}{2},0})}+
 \|\sqrt{-(\psi_t+2\psi_y^2)}w_{\Phi}\|_{\tilde{L}^{2}_{T}(B_{\psi}^{\frac{1}{2},0})}
 +\|\partial_yw_{\Phi}\|_{\tilde{L}^{2}_{T}(B_{\psi}^{\frac{1}{2},0})}\\
 &~~~~+\sqrt{\lambda}\Big(\|w_{\Phi}\|_{\tilde{L}^{2}_{T,\dot{\theta}}(B_{\psi}^{\frac{1}{2},0})}
 +\|w_{\Phi}\|_{\tilde{L}^{2}_{T,\dot{\theta}}(B_{\psi}^{1,0})}
 \Big)\\
 \leq &G\Big(\|e^{\langle D\rangle}w_{0}\|_{B^{\frac{1}{2},0}_{\psi_{0}}}
 +T^{\frac{1}{2}}(\|e^{\langle D\rangle}U\|_{\tilde{L}^{\infty}_{T}(B^{\frac{1}{2}})}+
 \|e^{\langle D\rangle}U\|_{\tilde{L}^{\infty}_{T}(B^{1})})\\
 &+({\langle T\rangle} ^{\frac{3}{2}}-1)^{\frac{1}{2}}\|e^{\langle D\rangle}\partial_tU\|_{L^{\infty}_{T}(B^{\frac{1}{2}})}
+({\langle T\rangle} ^{\frac{5}{2}}-1)^{\frac{1}{2}}\|e^{\langle D\rangle}U\|_{L^{\infty}_{T}(B^{\frac{1}{2}})}\\
 &+\sigma T^{\frac{1}{2}}\|e^{\langle D\rangle}U\|_{\tilde{L}^{\infty}_{T}(B^{\frac{1}{2}})}\|e^{\langle D\rangle}U\|_{\tilde{L}^{\infty}_{T}(B^{\frac{3}{2}})}\Big)
 \end{split}
 \end{equation}
 for any  $0<T\leq T_{2}$, and the weight $\Phi(t,\xi)$ is positive in $[0,T_2]$.
\end{theorem}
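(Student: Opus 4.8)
The plan is to convert the dyadic energy identity \eqref{EnergyForm} into the closed estimate \eqref{EnergyFinal} by combining Lemmas \ref{I_k}, \ref{J_i} and \ref{J46}, and then absorbing every solution-dependent quantity on the right into the left-hand side through an ordered choice of the parameters $\sigma$, $\lambda$, $\gamma$ and $T_2$, together with a continuation argument guaranteeing the positivity of $\Phi$.

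Concretely, I would first rewrite \eqref{EnergyForm} as $I(k)=\sum_{i=1}^{6}J_i(k)+\tfrac12\|e^{\psi_0}\Delta_k e^{\langle D\rangle}w_0\|_{L^2_+}^2$, so that $\sqrt{I(k)}\le\sum_{i=1}^{6}\sqrt{|J_i(k)|}+\tfrac{1}{\sqrt2}\|e^{\psi_0}\Delta_k e^{\langle D\rangle}w_0\|_{L^2_+}$. Multiplying by $2^{k/2}$ and summing over $k\in\mathbb Z$, Lemma \ref{I_k} bounds the resulting left side from below by the full left-hand side of \eqref{EnergyFinal}, the data contribution sums to $\|e^{\langle D\rangle}w_0\|_{B^{1/2,0}_{\psi_0}}$, and Lemmas \ref{J_i}, \ref{J46} bound the six remaining sums from above. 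At this stage every norm of $U_\Phi=e^{\Phi(t,D)}U$ appearing on the right would be replaced by the corresponding norm of $e^{\langle D\rangle}U$: since $\Phi(t,\xi)=(1-\lambda\theta(t))\langle\xi\rangle\le\langle\xi\rangle$ whenever $\theta\ge 0$, one has $\|U_\Phi\|_{B^{s}}\le\|e^{\langle D\rangle}U\|_{B^{s}}$, which produces exactly the forcing terms on the right of \eqref{EnergyFinal}.

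The heart of the argument is the absorption step. The pieces carrying a factor $\sigma$ (the $\sigma\|\partial_y w_\Phi\|$ terms from $J_1,J_3$) are absorbed into $\|\partial_y w_\Phi\|_{\tilde L^2_T(B^{1/2,0}_\psi)}$ by fixing $\sigma$ small; the terms $\|w_\Phi\|_{\tilde L^2_{T,\dot\theta}(B^{1,0}_\psi)}$ and $\|w_\Phi\|_{\tilde L^2_{T,\dot\theta}(B^{1/2,0}_\psi)}$ are absorbed into the $\sqrt\lambda$-weighted terms on the left by then taking $\lambda$ large; and the unweighted $\|w_\Phi\|_{\tilde L^2_T(B^{1/2,0}_\psi)}$ occurring in $J_4,J_6$ is controlled by $T^{1/2}\|w_\Phi\|_{\tilde L^\infty_T(B^{1/2,0}_\psi)}$, hence absorbed for $T$ small. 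The genuinely delicate piece is the $y$-weighted term $\|yw_\Phi\|_{\tilde L^2_T(B^{1/2,0}_\psi)}$ coming from $J_3$ and $J_4$, which carries no obvious smallness. Here I would exploit that the weight \eqref{psi} is designed precisely so that
\[
-(\psi_t+2\psi_y^2)=\frac{\gamma(1+y^2)}{16(1+t)^{\gamma+1}}-\frac{y^2}{32(1+t)^{2\gamma}}\gtrsim\frac{1+y^2}{(1+t)^{\gamma+1}},\qquad \gamma\ge 2,
\]
using $2\gamma\ge\gamma+1$. Consequently the good dissipation $\|\sqrt{-(\psi_t+2\psi_y^2)}\,w_\Phi\|_{\tilde L^2_T(B^{1/2,0}_\psi)}$ already dominates $\|yw_\Phi\|_{\tilde L^2_T(B^{1/2,0}_\psi)}$, with a comparison constant of the shape $c_\gamma^{-1/2}(1+T_2)^{(\gamma+1)/2}$ on $[0,T_2]$. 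I expect this to be the main obstacle: the parameters must be chosen in the right order so that each absorbing constant is strictly smaller than the corresponding left-hand coefficient — $\sigma$ first, then $\gamma$ large and $T_2$ small (the latter depending on $\sigma$ and on $\|e^{\langle D\rangle}U\|_{\tilde L^\infty_T(B^{3/2})}$, because of the product term $C_\sigma\|U_\Phi\|_{B^{3/2}}\|yw_\Phi\|$ in $J_3$), and finally $\lambda$ large. After this balancing the inequality takes the form $X\le G_0(Y+\alpha X)$ with $G_0\alpha<1$, yielding \eqref{EnergyFinal}.

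Finally, the positivity of $\Phi$ on $[0,T_2]$, i.e.\ $\lambda\theta(t)\le 1$, would be obtained by a continuation argument. Defining $T_2$ as the maximal time on which $\lambda\theta\le 1$ holds, one has $\Phi\ge 0$ there, so the substitution $\|U_\Phi\|_{B^s}\le\|e^{\langle D\rangle}U\|_{B^s}$ is legitimate and \eqref{EnergyFinal} is valid; but the right-hand side of the ODE \eqref{theta} for $\dot\theta$ is built exactly from the quantities $\|\partial_y w_\Phi\|_{B^{1/2,0}_\psi}$, $\|U_\Phi\|_{B^{1/2}}$, $\|w_\Phi\|_{B^{1,0}_\psi}$, $\|w_\Phi\|_{B^{1/2,0}_\psi}$, $\|U_\Phi\|_{B^{1}}$ and $\langle t\rangle^{\gamma}$, all of which are controlled by \eqref{EnergyFinal}. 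Integrating in time gives $\theta(T_2)\lesssim (\text{data})+T_2(\cdots)$, so $\lambda\theta(T_2)\le 1$ for $T_2$ sufficiently small, which both closes the continuation (forbidding $\lambda\theta$ from reaching $1$ at an interior time) and ensures $\Phi>0$ on $[0,T_2]$, completing the proof.
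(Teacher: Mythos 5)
Your proposal is correct and follows essentially the same route as the paper's proof: combine Lemmas \ref{I_k}, \ref{J_i} and \ref{J46}, replace the $U_{\Phi}$-norms by $e^{\langle D\rangle}U$-norms (valid while $\Phi\leq\langle\xi\rangle$), absorb the $\sigma$-terms, the $\dot{\theta}$-weighted terms and the unweighted $\tilde{L}^2_T$ terms by taking $\sigma$ small, $\lambda$ large and $T$ small, and handle the critical $\|yw_{\Phi}\|_{\tilde{L}^{2}_{T}(B_{\psi}^{\frac{1}{2},0})}$ term exactly as the paper does via the weight inequality $-(\psi_t+2\psi_y^2)\gtrsim y^2(1+t)^{-(\gamma+1)}$, i.e. \eqref{weightControl}--\eqref{weightControlApp}, with $\gamma$ large paired with $T$ small, before finally integrating \eqref{theta} to choose $T_2$ with $\lambda\theta<1$ so that $\Phi>0$, as in \eqref{chooseT}. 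Your explicit ordering of the parameter choices and the continuation argument for the positivity of $\Phi$ are just slightly more careful formulations of the same steps.
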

\begin{proof}
 Combining Lemmas \ref{I_k}, \ref{J_i} and \ref{J46}, there is a constant $G>0$, such that
 %Taking square root of the above inequality  from \eqref{EnergyForm} and  Multiplying it by $2^{\frac{k}{2}}$ and taking summation in $k$, due to the estimate above, we have
 \begin{equation}\label{EnergyIneq}
 \begin{split}
 &\|w_{\Phi}\|_{\tilde{L}^{\infty}_{T}(B_{\psi}^{\frac{1}{2},0})}+
 \|\sqrt{-(\psi_t+2\psi_y^2)}w_{\Phi}\|_{\tilde{L}^{2}_{T}(B_{\psi}^{\frac{1}{2},0})}
 +\|\partial_yw_{\Phi}\|_{\tilde{L}^{2}_{T}(B_{\psi}^{\frac{1}{2},0})}\\
 &+\sqrt{\lambda}\Big(\|w_{\Phi}\|_{\tilde{L}^{2}_{T,\dot{\theta}}(B_{\psi}^{\frac{1}{2},0})}
 +\|w_{\Phi}\|_{\tilde{L}^{2}_{T,\dot{\theta}}(B_{\psi}^{1,0})}
 \Big)\\
\leq &G\Big(\|e^{ \langle D\rangle}w_{0}\|_{B^{\frac{1}{2},0}_{\psi_{0}}} +C_{\sigma}\|w_{\Phi}\|_{\tilde{L}^{2}_{T,\dot{\theta}}(B_{\psi}^{1,0})} +\sigma\|\partial_yw_{\Phi}\|_{\tilde{L}^{2}_{T}(B_{\psi}^{\frac{1}{2},0})}
 +\|yw_{\Phi}\|_{\tilde{L}^{2}_{T}(B_{\psi}^{\frac{1}{2},0})}\\
 &
  +T^{\frac{1}{2}}(\|U_{\Phi}\|_{\tilde{L}^{\infty}_{T}(B^{\frac{1}{2}})}+
 \|U_{\Phi}\|_{\tilde{L}^{\infty}_{T}(B^{1})})
 +C_{\sigma}\|U_{\Phi}\|_{\tilde{L}^{\infty}_{T}(B^{\frac{3}{2}})}
 \|yw_{\Phi}\|_{\tilde{L}^{2}_{T}(B_{\psi}^{\frac{1}{2},0})}
 \\
 &+({\langle T\rangle} ^{\frac{3}{2}}-1)^{\frac{1}{2}}\|[\partial_tU]_{\Phi}\|_{L^{\infty}_{T}(B^{\frac{1}{2}})}+({\langle T\rangle} ^{\frac{5}{2}}-1)^{\frac{1}{2}}\|U_{\Phi}\|_{L^{\infty}_{T}(B^{\frac{1}{2}})}\\
 &+\sigma T^{\frac{1}{2}}\|U_{\Phi}\|_{\tilde{L}^{\infty}_{T}(B^{\frac{1}{2}})}
 \|U_{\Phi}\|_{\tilde{L}^{\infty}_{T}(B^{\frac{3}{2}})}
 +\|w_{\Phi}\|_{\tilde{L}^{2}_{T,\dot{\theta}}(B_{\psi}^{\frac{1}{2},0})}\Big)
 \end{split}
 \end{equation}
  for any given $\sigma>0$.

Noticing that
\begin{align}\label{weightControl}
\sqrt{-(\psi_t+2\psi_y^2)}\geq y \sqrt{\frac{\gamma-1}{16(1+t)^{\gamma+1}}},
\end{align}
by choosing $\lambda$ and $\gamma$ large enough, such that
\begin{align}\label{weightControlApp}
G\big(C_{\sigma}\|U_{\Phi}\|_{\tilde{L}^{\infty}_{T_{1}}(B^{\frac{3}{2}})}+1\big)
<\sqrt{\frac{\gamma-1}{16(1+T_{1})^{\gamma+1}}}
\end{align}
holds for a fixed $T_{1}>0$.

 Therefore, by using the inequalities \eqref{weightControl} and \eqref{weightControlApp}, the estimate \eqref{EnergyFinal} is derived from \eqref{EnergyIneq}
 for any  $0<T\leq T_{1}$.

On the other hand, in view of  \eqref{theta} and \eqref{EnergyFinal}, there exists   a constant $C(w_0,U,t)$ depending on $w_0$, $U$ and $t$ such that
 \begin{align*}
\theta(t)=\int_{0}^{t}\dot{\theta}dt'
=&\int_{0}^{t}\Big({\langle t'\rangle} ^{\frac{\gamma}{4}}\|\partial_{y}w_{\Phi}\|_{B_{\psi}^{\frac{1}{2},0}}
+{\langle t'\rangle} ^{\frac{\gamma}{4}}\|U_{\Phi}\|_{B^{\frac{1}{2}}}\\
&+{\langle t'\rangle} ^{\frac{\gamma}{2}}\|w_{\Phi}\|^2_{B_{\psi}^{1,0}}
+\|w_{\Phi}\|^2_{B_{\psi}^{\frac{1}{2},0}}+
{\langle t'\rangle} ^{\frac{1}{2}}\|U_{\Phi}\|^2_{B^{\frac{1}{2}}}+{\langle t'\rangle} ^{\frac{1}{2}}\|U_{\Phi}\|^2_{B^{1}}+{\langle t'\rangle}^{\gamma}\Big)dt'\\
\lesssim & ({\langle t\rangle} ^{\frac{\gamma}{2}+1}-1)^{\frac{1}{2}}
\|\partial_yw_{\Phi}\|_{\tilde{L}^{2}_{t}(B_{\psi}^{\frac{1}{2},0})}
+({\langle t\rangle} ^{\frac{\gamma}{4}+1}-1)
\|e^{\langle D\rangle}U\|_{\tilde{L}^{\infty}_{t}(B^{\frac{1}{2}})}
+{\langle t\rangle} ^{\frac{\gamma}{2}}
\|w_{\Phi}\|_{\tilde{L}^{2}_{t}(B_{\psi}^{1,0})}\\
&+\|w_{\Phi}\|_{\tilde{L}^{2}_{t}(B_{\psi}^{\frac{1}{2},0})}
+({\langle t\rangle} ^{\frac{3}{2}}-1)
(\|e^{\langle D\rangle}U\|^2_{\tilde{L}^{\infty}_{t}(B^{\frac{1}{2}})}
+\|e^{\langle D\rangle}U\|^2_{\tilde{L}^{\infty}_{t}(B^{1})})
+({\langle t\rangle} ^{\gamma+1}-1)\\
\lesssim &C(w_0,U,t).
\end{align*}
Therefore one can choose $0<T_2\leq T_1$ properly small such that
 \begin{equation}\label{chooseT}
 0<T_{2}\leq \sup\limits_{t>0}\left\{\theta(t)<\frac{1}{\lambda}\right\},
 \end{equation}
which guarantees the weight $\Phi(t,\xi)$ defined in \eqref{Phi} is positive on $[0,T_2]$. Thereby we obtain the apriori estimate \eqref{EnergyFinal} for $0< T\leq T_2$.
\end{proof}
\begin{remark}\label{deltaEst}
If the weight $\Phi$ is replaced by $\Phi^{\delta}$, there exists a time $T_{\delta}$ such that the apriori estimate in Theorem \ref{apriori} is still valid in $[0,T_{\delta}]$, with $e^{\langle D\rangle}$ being replaced by $e^{\delta\langle D\rangle}$.
\end{remark}

\subsection{Existence of a solution}

To obtain the existence of a solution to the problem \eqref{homGP}, similar to that given in \cite{Zhu-W},  consider the approximation of \eqref{homGP} as follows for any integer $n\geq 1$,
 \begin{equation}\label{approximate}
\left\{
\begin{array}{l}
\partial_t w_{n} +(w_{n}+u^s)\partial_{x}w_{n}+w_{n}\partial_{x}u^s-\int_{0}^{y}\partial_{x}(w_{n}+u^s)dy'
\partial_{y}(w_{n}+u^s)\\ [8pt]
~~~~~~~~~~~+\int_{+\infty}^{y}w_{n} dy'
-\partial_{y}^{2}w_{n}-\frac{1}{n^2}\partial_{x}^2w_{n}\\ [8pt]
~~~~~~~=(1-\phi)(\partial_t U+(1+\phi)U\partial_x U)-\int_{y}^{+\infty}U(1-\phi)dy',\\ [8pt]
w_{n}\big|_{y=0}=0,~~\lim\limits_{y\to +\infty}w_{n}=0,~~w_{n}\big|_{t=0}=w_0(x,y).
\end{array}
\right.
\end{equation}
The well-posedness of the problems \eqref{approximate} can be  obtained from the classical theory of the parabolic equations, and $w_n(n\geq1)$ satisfies the same apriori estimate \eqref{EnergyFinal} on $[0,T_2]$ as given in Theorem \ref{apriori}.

For any fixed $\delta\in (0,1)$ in $\Phi^{\delta}$ given in \eqref{Phi},
by using
$$\|(w_n)_{\Phi^{\delta}}\|_{\tilde{L}^{2}_{T}(B_{\psi}^{2,0})}\lesssim
\|(w_n)_{{\Phi}^{1}}\|_{\tilde{L}^{2}_{T}(B_{\psi}^{1,0})}\quad (n\geq 1),$$
in a way similar to the proof of uniqueness given in the next subsection, we can get that there exists $0<T^{*}\leq T_{2}$ such that $V=w_{n+1}-w_{n}$ satisfies the following estimate,
$$\|V_{\Phi^{\delta}}\|_{\tilde{L}^{\infty}_{T}(B_{\psi}^{\frac{1}{2},0})}
+\|\partial_yV_{\Phi^{\delta}}\|_{\tilde{L}^{2}_{T}(B_{\psi}^{\frac{1}{2},0})}
 \lesssim (\frac{1}{n^2}+\frac{1}{(n+1)^{2}})\mathcal{R},\quad \text{for}~0<T\leq T^{*}$$
where $\mathcal{R}$ represents the right hand side of \eqref{EnergyFinal}. Therefore for any fixed $0<\delta\leq 1$ in the weight $\Phi^{\delta}$, $\{(w_n)_{\Phi^{\delta}}\}_{n\geq 1}$ is a Cauchy sequence in $\tilde{L}^{\infty}_{T}(B_{\psi}^{\frac{1}{2},0})$, which  follows the existence of a solution to the problem \eqref{homGP}.

\subsection{Uniqueness of the solution}
In this subsection, we study the uniqueness of the solution to the problem \eqref{homGP}.
Suppose that the problem \eqref{homGP} has two solutions $w^1$ and $w^2$, obviously $V=w^1-w^2$ satisfies
the following problem,
\begin{equation}\label{errGP}
\left\{
\begin{array}{l}
\partial_t V +\int_{+\infty}^{y}V dy'-\partial_{y}^{2}V
=-(w^2+u^s)\partial_{x}V -V\partial_{x}w^1+V\partial_{x}u^s
\\ [8pt] ~~~~~~~~~~~~~~~~~~~~~~~~~~~~~~~~~+
\int_{0}^{y}\partial_{x}(w^2+u^s)
\partial_{y}V
+\int_{0}^{y}\partial_x Vdy'\partial_y(w^1+u^s),\\ [8pt]
V\big|_{y=0}=0,~~\lim\limits_{y\to +\infty}V=0, ~~V\big|_{t=0}=0.
\end{array}
\right.
\end{equation}
Denote the corresponding weights by $\theta^1(t),\Phi^{1}_1(t,\xi)$ and $\theta^2(t),\Phi^{1}_2(t,\xi)$ given in \eqref{theta} and \eqref{Phi} with respect to $w^1$ and $w^2$ for $\delta=1$,
we introduce  $$\Theta=\theta^1+\theta^2~\text{and}~\widehat{\Phi}^{\delta}=(\delta-\lambda\Theta(t))\langle\xi\rangle,
~\text{for any given}~\delta\in(0,1).$$

%and denote $\hat{\Phi}^{\delta}(t,\xi)=(\delta-\lambda \theta(t))\langle\xi\rangle$ by $\hat{\Phi}(t,\xi)$ for simplicity.

From \eqref{errGP}, one has  that
\begin{equation*}\label{weightErrGP}
\begin{split}
\partial_t V_{\hat{\Phi}^\delta} +\lambda\dot{\Theta}V_{\hat{\Phi}^\delta}&+\int_{+\infty}^{y}V_{\hat{\Phi}^\delta} dy'-\partial_{y}^{2}V_{\hat{\Phi}^\delta}
=-[(w^2+u^s)\partial_{x}V]_{\hat{\Phi}^\delta} -[V\partial_{x}w^1]_{\hat{\Phi}^\delta}
-[V\partial_{x}u^s]_{\hat{\Phi}^\delta}\\
&+[\int_{0}^{y}\partial_{x}(w^2+u^s)dy'\partial_{y}V]_{\hat{\Phi}^\delta}
+[\int_{0}^{y}\partial_x Vdy'\partial_y(w^1+u^s)]_{\hat{\Phi}^\delta}.
\end{split}
\end{equation*}

By acting the dyadic operator $\Delta_{k}$ on the above equation and taking $L^2(Q_T)$ inner product with $e^{2\psi}\Delta_{k}V_{\hat{\Phi}^\delta}$, it yields that
%\begin{equation}\label{errEnergyForm}
\begin{equation}\label{errEnergyForm}
\begin{split}
 &\Big(e^{\psi}\partial_t \Delta_{k}V_{\hat{\Phi}^\delta}\Big|e^{\psi}\Delta_{k}V_{\hat{\Phi}^\delta}\Big)
 +\lambda\Big(\dot{\Theta}\langle D\rangle e^{\psi}\Delta_{k}V_{\hat{\Phi}^\delta}\Big|e^{\psi}\Delta_{k}V_{\hat{\Phi}^\delta}\Big)
 -\Big(e^{\psi}\Delta_{k}\partial_{y}^{2}V_{\hat{\Phi}^\delta}\Big|e^{\psi}\Delta_{k}V_{\hat{\Phi}^\delta}\Big) \\
 =&
-\Big(e^{\psi}\Delta_{k}[(w^2+u^s)\partial_{x}V]_{\hat{\Phi}^\delta}\Big|e^{\psi}\Delta_{k}V_{\hat{\Phi}^\delta}\Big)
-\Big(e^{\psi}\Delta_{k}[V\partial_{x}u^s]_{\hat{\Phi}^\delta}\Big|e^{\psi}\Delta_{k}V_{\hat{\Phi}^\delta}\Big) \\
&-\Big(e^{\psi}\Delta_{k}[V\partial_{x}w^1]_{\hat{\Phi}^\delta}\Big|e^{\psi}\Delta_{k}V_{\hat{\Phi}^\delta}\Big)
+\Big(e^{\psi}\Delta_{k}[(\int_{0}^{y}\partial_{x}(w^2+u^s) dy')\partial_{y}V]_{\hat{\Phi}^\delta}\Big|e^{\psi}\Delta_{k}V_{\hat{\Phi}^\delta}\Big) \\
&+\Big(e^{\psi}\Delta_{k}[(\int_{0}^{y}\partial_{x}V dy')\partial_{y}(w^1+u^s)]_{\hat{\Phi}^\delta}\Big|e^{\psi}\Delta_{k}V_{\hat{\Phi}^\delta}\Big)
 -\Big(e^{\psi}\Delta_{k}[\int_{+\infty}^{y}V dy']_{\hat{\Phi}^\delta}\Big|e^{\psi}\Delta_{k}V_{\hat{\Phi}^\delta}\Big) \\
 :=&\sum_{i=1}^{6}J_i.
 \end{split}
\end{equation}

In a way similar to that given in Lemmas \ref{I_k} and \ref{J46},
one has the following result:
\begin{lemma}\label{eI_k}
Set
$$I(k)\triangleq \Big(e^{\psi}\partial_t \Delta_{k}V_{\hat{\Phi}^\delta}\Big|e^{\psi}\Delta_{k}V_{\hat{\Phi}^\delta}\Big)
 +\lambda\Big(\dot{\Theta}\langle D\rangle e^{\psi}\Delta_{k}V_{\hat{\Phi}^\delta}\Big|e^{\psi}\Delta_{k}V_{\hat{\Phi}^\delta}\Big)
 -\Big(e^{\psi}\Delta_{k}\partial_{y}^{2}V_{\hat{\Phi}^\delta}\Big|e^{\psi}\Delta_{k}V_{\hat{\Phi}^\delta}\Big).$$
For $J_{6}$ and $I(k)$ given at above, there hold
\begin{align*}\label{J4}
\sum_{k\in \mathbb{Z}}2^{\frac{k}{2}}\sqrt{|J_6(k)|}\lesssim
 \|yV_{\Hat{\Phi}}\|_{\tilde{L}^{2}_{T}(B_{\psi}^{\frac{1}{2},0})}
+\|V_{\Hat{\Phi}}\|_{\tilde{L}^{2}_{T,\dot{\Theta}}(B_{\psi}^{\frac{1}{2},0})}
\end{align*}
and
\begin{align*}
\sum_{k\in \mathbb{Z}}2^{\frac{k}{2}}\sqrt{I(k)}\gtrsim & \|V_{\Hat{\Phi}}\|_{\tilde{L}^{\infty}_{T}(B_{\psi}^{\frac{1}{2},0})}+
 \|\sqrt{-(\psi_t+2\psi_y^2)}V_{\Hat{\Phi}}\|_{\tilde{L}^{2}_{T}(B_{\psi}^{\frac{1}{2},0})}
 +\|\partial_yV_{\Hat{\Phi}}\|_{\tilde{L}^{2}_{T}(B_{\psi}^{\frac{1}{2},0})}\\
 &+\sqrt{\lambda}\big(\|V_{\Hat{\Phi}}\|_{\tilde{L}^{2}_{T,\dot{\Theta}}(B_{\psi}^{\frac{1}{2},0})}
 +\|V_{\Hat{\Phi}}\|_{\tilde{L}^{2}_{T,\dot{\Theta}}(B_{\psi}^{\frac{1}{2},0})}
 \big).
\end{align*}
\end{lemma}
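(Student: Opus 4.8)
The plan is to obtain both estimates by transcribing the arguments of Lemmas \ref{I_k} and \ref{J46} to the difference equation \eqref{errGP}, under the substitutions $w\mapsto V$, $\Phi\mapsto\hat\Phi^\delta$ and $\theta\mapsto\Theta$, exploiting the homogeneous data $V|_{t=0}=0$ and $V|_{y=0}=0$. Since $\Theta=\theta^1+\theta^2$ with each $\theta^i$ solving \eqref{theta}, the function $\dot\Theta$ dominates every time weight (in particular the summand $\langle t\rangle^\gamma$) that was used in the existence estimates, so the absorption mechanism carries over verbatim.

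For the lower bound on $\sum_k 2^{k/2}\sqrt{I(k)}$, I would expand the three inner products in the definition of $I(k)$ exactly as in the computation preceding Lemma \ref{I_k}. The time-derivative term produces $\tfrac12\|e^\psi\Delta_k V_{\hat\Phi}(T,\cdot)\|^2_{L^2_+}$ together with the commutator $-(\psi_t e^\psi\Delta_k V_{\hat\Phi}\,|\,e^\psi\Delta_k V_{\hat\Phi})$; crucially the initial contribution drops out here because $V|_{t=0}=0$, which is precisely why $I(k)$ in the present lemma carries no additive data term. Integrating by parts in $y$ in the diffusion term (the boundary terms at $y=0$ and $y=+\infty$ vanish by $V|_{y=0}=0$ and the decay) yields $\tfrac12(e^\psi\Delta_k\partial_y V_{\hat\Phi}\,|\,e^\psi\Delta_k\partial_y V_{\hat\Phi})$ minus a $2\psi_y^2$ term; combining the $\psi_t$ and $2\psi_y^2$ contributions reproduces the weight $\sqrt{-(\psi_t+2\psi_y^2)}$. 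Finally, since $\langle\xi\rangle=1+|\xi|\gtrsim 1+2^k$ on the support of $\Delta_k$, the damping term $\lambda(\dot\Theta\langle D\rangle\,\cdot\,|\,\cdot)$ supplies the two $\sqrt\lambda$-weighted $\tilde L^2_{T,\dot\Theta}$ norms at regularity $\tfrac12$ and $1$. Summing against $2^{k/2}$ and using $\sqrt{a_1+\cdots+a_n}\gtrsim\sqrt{a_1}+\cdots+\sqrt{a_n}$ gives the stated lower bound.

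The estimate for $J_6(k)=-(e^\psi\Delta_k[\int_{+\infty}^y V\,dy']_{\hat\Phi}\,|\,e^\psi\Delta_k V_{\hat\Phi})$ is structurally identical to that of $J_4$ in Lemma \ref{J46}, since $\Delta_k$ and the Fourier multiplier $e^{\hat\Phi}$ commute with the vertical integration. After Cauchy-Schwarz in $(x,y)$, I would apply a weighted Cauchy-Schwarz to the inner integral, writing $\int_{+\infty}^y\Delta_k V_{\hat\Phi}\,dy'$ with the splitting $e^{-\psi}\cdot e^{\psi}$, and then use the elementary weight bound $e^{2\psi(y)}\int_y^{+\infty}e^{-2\psi(t,y')}\,dy'\lesssim\langle t\rangle^{\gamma}$, which follows from the Gaussian decay of $e^{-2\psi}$ in $y$. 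A Young inequality then splits the resulting bound into a term controlled by $\|yV_{\hat\Phi}\|_{\tilde L^2_T(B^{1/2,0}_\psi)}$ and a term carrying the factor $\langle t\rangle^\gamma$; the latter is absorbed into $\|V_{\hat\Phi}\|_{\tilde L^2_{T,\dot\Theta}(B^{1/2,0}_\psi)}$ because $\langle t\rangle^\gamma$ is one of the summands of $\dot\Theta$.

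I do not expect a genuine obstacle here: the whole lemma is a transcription of earlier estimates, and the only points requiring attention are that the homogeneous initial data $V|_{t=0}=0$ is what removes the data term from $I(k)$, and that $\dot\Theta$ still dominates the time weights produced in the $J_6$ bound — both of which are immediate from the definition $\Theta=\theta^1+\theta^2$ and equation \eqref{theta}.
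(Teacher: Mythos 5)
Your proposal is correct and coincides with the paper's own (essentially one-line) proof, which simply invokes the analogy with Lemmas \ref{I_k} and \ref{J46}; you carry out exactly that transcription, and your two key observations --- that $V|_{t=0}=0$ eliminates the initial-data term from $I(k)$, and that $\dot\Theta=\dot\theta^1+\dot\theta^2\geq \langle t\rangle^\gamma$ allows the $\langle t\rangle^{\gamma}$-weighted term in the $J_6$ bound to be absorbed into the $\tilde{L}^{2}_{T,\dot{\Theta}}$ norm --- are precisely what makes the earlier arguments carry over verbatim.
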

 As shown in \cite{Z-Z,Zhu-W}, for $0<\delta<1$, there hold $$\|w^{i}_{\Hat{\Phi}^{\delta}}\|_{\tilde{L}^{\infty}_{T}(B_{\psi}^{\frac{3}{2},0})}\lesssim
\|w^{i}_{{\Phi}^{1}_{i}}\|_{\tilde{L}^{\infty}_{T}(B_{\psi}^{\frac{1}{2},0})},
~\|w^{i}_{\Hat{\Phi}^{\delta}}\|_{\tilde{L}^{2}_{T}(B_{\psi}^{\frac{3}{2},0})}\lesssim
\|w^{i}_{{\Phi}^{1}_{i}}\|_{\tilde{L}^{2}_{T}(B_{\psi}^{\frac{1}{2},0})}\quad (i=1,2).$$
 The remaining terms given on the right hand side of \eqref{errEnergyForm} can be controlled as  given in \cite[pp.~34-38]{Zhu-W}, and one concludes:
\begin{lemma}\label{eJ_i}
For any $\sigma>0$,  the terms $J_{i}~(1\leq i \leq 5)$ given in \eqref{errEnergyForm} satisfy the following estimate:
\begin{align*}
&\sum_{i=0}^{5}\sum_{k\in \mathbb{Z}}2^{\frac{k}{2}}\sqrt{|J_i(k)|}\\
\lesssim &
\big(\|\partial_{y}w^2_{{\Phi^{1}_2}}\|_{\tilde{L}^{2}_{T}(B_{\psi}^{\frac{1}{2},0})}
+T^{\frac{1}{2}}\|e^{\langle D\rangle}U\|_{\tilde{L}^{\infty}_{T}(B^{\frac{1}{2}})}\big)^{\frac{1}{2}}
\|V_{\hat{\Phi}^\delta}\|_{\tilde{L}^{\infty}_{T}(B_{\psi}^{\frac{1}{2},0})}(\langle T\rangle^{\frac{\gamma}{2}+1}-1)^{\frac{1}{4}}\\
&+T^{\frac{1}{2}}\|e^{\langle D\rangle}U\|^{\frac{1}{2}}_{\tilde{L}^{\infty}_{T}(B^{\frac{1}{2}})}
\|V_{\hat{\Phi}^\delta}\|_{\tilde{L}^{\infty}_{T}(B_{\psi}^{\frac{1}{2},0})}
+\sigma(\langle T\rangle^{\frac{\gamma}{2}+1}-1)^{\frac{1}{2}}
\|w^1_{\Phi^{1}_1}\|_{\tilde{L}^{\infty}_{T}(B_{\psi}^{\frac{1}{2},0})}
\|V_{\hat{\Phi}^\delta}\|_{\tilde{L}^{\infty}_{T}(B_{\psi}^{\frac{1}{2},0})}\\
&+\sigma\|\partial_{y}V_{\hat{\Phi}^\delta}\|_{\tilde{L}^{2}_{T}(B_{\psi}^{\frac{1}{2},0})}
+C_{\sigma}(\langle T\rangle^{\frac{\gamma}{2}+1}-1)^{\frac{1}{2}}
\|w^2_{\Phi^{1}_2}\|_{\tilde{L}^{\infty}_{T}(B_{\psi}^{\frac{1}{2},0})}
\|V_{\hat{\Phi}^\delta}\|_{\tilde{L}^{\infty}_{T}(B_{\psi}^{\frac{1}{2},0})}\\
&+C_{\sigma}\|U_{\hat{\Phi}^\delta}\|_{\tilde{L}^{\infty}_{T}(B_{\psi}^{\frac{3}{2}})}
\|yV_{\hat{\Phi}^\delta}\|_{\tilde{L}^{2}_{T}(B_{\psi}^{\frac{1}{2},0})}
+C_{\sigma}\|V_{\hat{\Phi}^\delta}\|_{\tilde{L}^{2}_{T,\dot{\Theta}}(B_{\psi}^{1,0})}
.
\end{align*}
\end{lemma}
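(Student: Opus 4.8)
The plan is to estimate $J_1,\dots,J_5$ by Bony's paraproduct decomposition in the horizontal variable, writing each bilinear term $fg=T_fg+T_gf+R(f,g)$ and reducing it to dyadic blocks, exactly as in the existence estimate of Lemma \ref{J_i}. The decisive structural point, which dictates the whole bookkeeping, is that for the difference $V$ we must keep $V$ itself (never $\partial_xV$ alone) inside the energy norm $\tilde{L}^{\infty}_{T}(B_{\psi}^{\frac{1}{2},0})$, since $V$ carries no spare regularity. Consequently, in every product the horizontal derivative has to be moved either onto the coefficients $w^1,w^2,u^s$ --- which is why they are measured in $B^{\frac32}$ rather than $B^{\frac12}$ --- or onto the analytic damping term.

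Two devices make this possible, and I would record them first. The elementary inequality $\langle\xi\rangle\le\langle\xi-\eta\rangle+\langle\eta\rangle$, together with the positivity of $\delta-\lambda\Theta(t)$ on the interval where $\hat{\Phi}^\delta$ is positive (ensured as in \eqref{chooseT}), yields $e^{\hat{\Phi}^\delta(t,\xi)}\le e^{\hat{\Phi}^\delta(t,\xi-\eta)}e^{\hat{\Phi}^\delta(t,\eta)}$; applied to a dyadic product this bounds every weighted bilinear block by the convolution of the individually weighted factors, so that each factor appears with its own weight $(\cdot)_{\hat{\Phi}^\delta}$. The second device is the gain encoded in $\delta<1$: since $\Phi^{1}_i-\hat{\Phi}^\delta=(1-\delta)\langle\xi\rangle+\lambda(\Theta-\theta^i)\langle\xi\rangle\ge(1-\delta)\langle\xi\rangle$, on the Fourier support of $\Delta_k$ one has $2^{\frac{3k}{2}}e^{-(1-\delta)2^{k}}\lesssim C_\delta\,2^{\frac{k}{2}}$, which is precisely the inequalities $\|w^i_{\hat{\Phi}^\delta}\|_{B^{\frac32,0}_\psi}\lesssim\|w^i_{\Phi^{1}_i}\|_{B^{\frac12,0}_\psi}$ quoted before the lemma. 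Thus one extra horizontal derivative can be transferred to a coefficient at the cost only of the constant $C_\delta$, with no net loss.

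With these tools, the transport term $J_1=[(w^2+u^s)\partial_xV]_{\hat{\Phi}^\delta}$ is the one carrying $\partial_xV$; its derivative loss is paid not by the coefficient but by the analytic damping $\lambda\dot{\Theta}\langle D\rangle$ on the left of \eqref{errEnergyForm}, producing $\|V_{\hat{\Phi}^\delta}\|_{\tilde{L}^{2}_{T,\dot{\Theta}}(B_{\psi}^{1,0})}$, which is absorbed later once $\lambda$ is large. The reaction terms $J_2=[V\partial_xu^s]_{\hat{\Phi}^\delta}$ and $J_3=[V\partial_xw^1]_{\hat{\Phi}^\delta}$ carry the derivative on the coefficient, so the $B^{\frac32}$ gain applies and $V$ stays in the $B^{\frac12}$ energy norm, yielding the (coefficient)$\times\|V_{\hat{\Phi}^\delta}\|_{\tilde{L}^{\infty}_{T}(B_{\psi}^{\frac12,0})}$ contributions with their time weights. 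In the low-frequency factor of each paraproduct I would integrate by parts in $y$ and use the boundary condition $V|_{y=0}=0$, estimating that factor through its $y$-derivative against $\|e^{-\psi}\|_{L^2(\mathbb{R}_+)}\lesssim\langle t\rangle^{\frac{\gamma}{4}}$; Young's inequality with a parameter $\sigma$ then splits the output into an absorbable piece $\sigma\|\partial_yV_{\hat{\Phi}^\delta}\|_{\tilde{L}^{2}_{T}(B_{\psi}^{\frac12,0})}$ and a $C_\sigma$ piece.

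For the vertical-integral terms $J_4=[(\int_{0}^{y}\partial_x(w^2+u^s)\,dy')\partial_yV]_{\hat{\Phi}^\delta}$ and $J_5=[(\int_{0}^{y}\partial_xV\,dy')\partial_y(w^1+u^s)]_{\hat{\Phi}^\delta}$ I would split the coefficient $w^i+u^s$ into its boundary-layer part $w^i$ and its outer part $u^s=U\phi$. The $w^i$-contribution is handled with the factor $\|e^{-\psi}\|_{L^2(\mathbb{R}_+)}$ as above, while the primitive $\int_{0}^{y}\partial_xu^s\,dy'=\partial_xU\int_{0}^{y}\phi\,dy'$ of the outer part grows linearly in $y$ and therefore forces the weighted norm $\|yV_{\hat{\Phi}^\delta}\|_{\tilde{L}^{2}_{T}(B_{\psi}^{\frac12,0})}$ paired with $\|U_{\hat{\Phi}^\delta}\|_{\tilde{L}^{\infty}_{T}(B_{\psi}^{\frac32})}$; this is one of the admissible terms on the right of the lemma, later absorbed by the dissipation $\sqrt{-(\psi_t+2\psi_y^2)}V_{\hat{\Phi}^\delta}$ of Lemma \ref{eI_k} through \eqref{weightControl}. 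Finally I would sum the dyadic bounds: each $|J_i(k)|$ is a time integral of $\|e^{\psi}\Delta_kV_{\hat{\Phi}^\delta}\|_{L^2_+}^2$-type quantities against a coefficient factor, so forming $2^{\frac{k}{2}}\sqrt{|J_i(k)|}$ and applying Cauchy--Schwarz in $k$ over the interaction bands $|k-k'|\le4$ (respectively $k'\ge k-3$) collapses the double sums into the stated products of Chemin--Lerner norms, the outer square root producing the half powers of the time factors such as $(\langle T\rangle^{\frac{\gamma}{2}+1}-1)^{\frac14}$. The main obstacle is exactly the no-derivative-loss requirement on $V$: everything hinges on simultaneously keeping $\hat{\Phi}^\delta$ positive (so the convolution inequality holds) and using $\delta<1$ to upgrade the coefficients from $B^{\frac12}$ to $B^{\frac32}$; granting these, the remaining estimates are the same paraproduct bookkeeping carried out in \cite[pp.~34--38]{Zhu-W}.
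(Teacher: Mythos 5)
Your plan is correct and takes essentially the same route as the paper: the paper likewise rests on the derivative-gain inequalities $\|w^{i}_{\hat{\Phi}^{\delta}}\|_{\tilde{L}^{\infty}_{T}(B_{\psi}^{\frac{3}{2},0})}\lesssim\|w^{i}_{{\Phi}^{1}_{i}}\|_{\tilde{L}^{\infty}_{T}(B_{\psi}^{\frac{1}{2},0})}$ afforded by $\delta<1$, and then performs exactly the paraproduct and dyadic bookkeeping you outline (deferring the computations to \cite[pp.~34--38]{Zhu-W}), with the transport term's derivative loss absorbed into the $\dot{\Theta}$-weighted norm, the primitive of $\partial_x u^s$ producing the $\|U_{\hat{\Phi}^\delta}\|_{\tilde{L}^{\infty}_{T}(B_{\psi}^{\frac{3}{2}})}\|yV_{\hat{\Phi}^\delta}\|_{\tilde{L}^{2}_{T}(B_{\psi}^{\frac{1}{2},0})}$ pairing, and the $\|e^{-\psi}\|_{L^{2}(\mathbb{R}_{+})}$ device supplying the time factors. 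Nothing in your outline deviates from, or falls short of, the paper's argument.
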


Based on the above lemmas, we give the proof of the uniqueness part of Theorem \ref{existence} as follows.

\textbf{Proof of the uniqueness part of Theorem \ref{existence}.}
Combining Lemmas \ref{eI_k} and  \ref{eJ_i}, from \eqref{errEnergyForm} we obtain that
\begin{align*}
&\|V_{\Hat{\Phi}^\delta}\|_{\tilde{L}^{\infty}_{T}(B_{\psi}^{\frac{1}{2},0})}+
 \|\sqrt{-(\psi_t+2\psi_y^2)}V_{\Hat{\Phi}^\delta}\|_{\tilde{L}^{2}_{T}(B_{\psi}^{\frac{1}{2},0})}
 +\|\partial_yV_{\Hat{\Phi}^\delta}\|_{\tilde{L}^{2}_{T}(B_{\psi}^{\frac{1}{2},0})}\\
 &~~~~+\sqrt{\lambda}\big(\|V_{\Hat{\Phi}^\delta}\|_{\tilde{L}^{2}_{T,\dot{\Theta}}(B_{\psi}^{\frac{1}{2},0})}
 +\|V_{\Hat{\Phi}^\delta}\|_{\tilde{L}^{2}_{T,\dot{\Theta}}(B_{\psi}^{\frac{1}{2},0})}
 \big)\\
 \lesssim &
 C_{\sigma}\|V_{\hat{\Phi}^\delta}\|_{\tilde{L}^{2}_{T,\dot{\Theta}}(B_{\psi}^{1,0})}
+\big(\|\partial_{y}w^2_{{\Phi^{1}_2}}\|_{\tilde{L}^{2}_{T}(B_{\psi}^{\frac{1}{2},0})}
+T^{\frac{1}{2}}\|e^{\langle D\rangle}U\|_{\tilde{L}^{\infty}_{T}(B^{\frac{1}{2}})}\big)^{\frac{1}{2}}
\|V_{\hat{\Phi}^\delta}\|_{\tilde{L}^{\infty}_{T}(B_{\psi}^{\frac{1}{2},0})}(\langle T\rangle^{\frac{\gamma}{2}+1}-1)^{\frac{1}{4}}\\
&+T^{\frac{1}{2}}\|e^{\langle D\rangle}U\|^{\frac{1}{2}}_{\tilde{L}^{\infty}_{T}(B^{\frac{1}{2}})}
\|V_{\hat{\Phi}^\delta}\|_{\tilde{L}^{\infty}_{T}(B_{\psi}^{\frac{1}{2},0})}
+\sigma(\langle T\rangle^{\frac{\gamma}{2}+1}-1)^{\frac{1}{2}}
\|w^1_{\Phi^{1}_1}\|_{\tilde{L}^{\infty}_{T}(B_{\psi}^{\frac{1}{2},0})}
\|V_{\hat{\Phi}^\delta}\|_{\tilde{L}^{\infty}_{T}(B_{\psi}^{\frac{1}{2},0})}\\
&+\sigma\|\partial_{y}V_{\hat{\Phi}^\delta}\|_{\tilde{L}^{2}_{T}(B_{\psi}^{\frac{1}{2},0})}
+C_{\sigma}(\langle T\rangle^{\frac{\gamma}{2}+1}-1)^{\frac{1}{2}}
\|w^2_{\Phi^{1}_2}\|_{\tilde{L}^{\infty}_{T}(B_{\psi}^{\frac{1}{2},0})}
\|V_{\hat{\Phi}^\delta}\|_{\tilde{L}^{\infty}_{T}(B_{\psi}^{\frac{1}{2},0})}\\
&+C_{\sigma}\|U_{\hat{\Phi}^\delta}\|_{\tilde{L}^{\infty}_{T}(B_{\psi}^{\frac{3}{2}})}
\|yV_{\hat{\Phi}^\delta}\|_{\tilde{L}^{2}_{T}(B_{\psi}^{\frac{1}{2},0})}
+ \|yV_{\Hat{\Phi}^\delta}\|_{\tilde{L}^{2}_{T}(B_{\psi}^{\frac{1}{2},0})}
+\|V_{\Hat{\Phi}^\delta}\|_{\tilde{L}^{2}_{T,\dot{\Theta}}(B_{\psi}^{\frac{1}{2},0})}.
\end{align*}
By taking $\lambda$ and $\gamma$ large  and $T>0$ being small properly, the above inequality implies
$$\|V_{\Hat{\Phi}^\delta}\|_{\tilde{L}^{\infty}_{T}(B_{\psi}^{\frac{1}{2},0})}=0.$$

Thus, we get $V\equiv0$ in $0\leq t \leq T$, this uniqueness can be extended to the whole time interval of existence of the solution given in Section 2.1 with the aid of continuation argument. \qed

\section{Blowup of the solution}
In this section, we are interested in whether the smooth solution of the problem \eqref{OGP} exists globally in time. Set $H_{t}=\{t'\in (0,t), y> 0\}$.
Under the assumption that the initial data $u_{0}$ and outer flow $U$ satisfy the condition \eqref{Blowupcondition1},
we shall prove that the norm $\|\partial_{x}u(t',0,y)\|_{L^{\infty}(H_t)}$ of  solution
 to the problem \eqref{OGP} will blow up in $(0,T)$. This shall be obtained by developing the idea from \cite{Kuk-V-W} and a contradiction argument.

 Denote by
 $$\bar{f}(t,y)=f(t,0,y) ~\text{and}~ \bar{g}(t)=g(t,0)$$
 for functions $f(t,x,y)$ and $(g(t,x)$.

 By restricting the problem \eqref{OGP} on the plane $\{x=0\}$,
  we get that  $\bar{u}(t,y)=u(t,0,y)$ satisfies the following problem in $H_{t}$,
\begin{equation}\label{OGP_Rx} \left\{\!\!
\begin{array}{lc}
\partial_t \bar{u}+\bar{u}\overline{\partial_x u}+\bar{v}\partial_y \bar{u}+\int^y_{\infty}\bar{u}dy'-\partial_{y}^2\bar{u}=0, &\\[8pt]
\bar{u}|_{t=0}=0, &\\[8pt]
(\bar{u}, \bar{v})|_{y=0}=(0, 0),  ~~\lim \limits_{y \to +\infty }\bar{u}= 0
\end{array}
\right.
\end{equation}
where $\overline{\partial_x u}+\partial_{y}\bar{v}=0$.

Assume that $\partial_{x}u(t,0,y$ does not blow up in $H_T$, and there is a constant $M_T>0$ such that
\begin{equation}\label{suppose0}
\|\partial_{x}u(t,0,y)\|_{L^{\infty}(H_T)}\leq M_T.
\end{equation}
Owing to the assumption \eqref{suppose0}, one has the following lemma.
\begin{lemma}\label{zeroSol}
Assume that $\overline{\partial_x u}\in C^2(H_T)$  satisfies the assumption \eqref{suppose0}. Then the problem \eqref{OGP_Rx} only has the trivial solution, $\partial_x u(t, 0, y)\equiv 0$ in $H_T$, in the class that satisfies the first decay condition given in \eqref{Blowupcondition3}.
\end{lemma}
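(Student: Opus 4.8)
The plan is to read \eqref{OGP_Rx} as a linear, homogeneous parabolic initial--boundary value problem for the single trace $\bar u(t,y)=u(t,0,y)$, in which the reaction coefficient $\overline{\partial_x u}$ and the transport coefficient $\bar v(t,y)=-\int_0^y\overline{\partial_x u}(t,y')\,dy'$ are treated as prescribed functions. By the standing no-blowup hypothesis \eqref{suppose0} these coefficients are uniformly bounded, $\|\overline{\partial_x u}\|_{L^\infty(H_T)}\le M_T$ and $|\bar v(t,y)|\le M_T\,y$, so \eqref{OGP_Rx} becomes a genuinely linear problem carrying zero initial data, zero trace at $y=0$, and decay as $y\to+\infty$. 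Since $\bar u\equiv0$ manifestly solves it, establishing the lemma reduces to a uniqueness statement: every admissible solution must coincide with the trivial one, which is precisely what the lemma records as $\partial_x u(t,0,y)\equiv0$ in $H_T$.

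The core is a weighted $L^2$ estimate run with the Gaussian weight $\psi(t,y)$ of \eqref{psi} used throughout Section~2. Testing \eqref{OGP_Rx} against $e^{2\psi}\bar u$ and integrating over $\{y>0\}$, the diffusion term produces $\|e^{\psi}\partial_y\bar u\|_{L^2}^2$ up to the $\psi_y$-commutator already treated in Lemma \ref{I_k}; the reaction term $(e^{\psi}\bar u\,\overline{\partial_x u}\,|\,e^{\psi}\bar u)$ is bounded by $M_T\|e^{\psi}\bar u\|_{L^2}^2$; and the transport term, after one integration by parts in $y$ using $\partial_y\bar v=-\overline{\partial_x u}$ and $\bar v|_{y=0}=0$, splits into a term bounded by $M_T\|e^{\psi}\bar u\|_{L^2}^2$ and a weighted term $\lesssim M_T\|y\,e^{\psi}\bar u\|_{L^2}^2$ that the good contribution $-(\psi_t+2\psi_y^2)$ of \eqref{weightControl} absorbs once $\gamma$ is chosen large relative to $M_T$, exactly as in \eqref{weightControlApp}. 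These steps are routine and mirror Lemmas \ref{I_k} and \ref{J46}.

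The main obstacle is the nonlocal term $\int_{+\infty}^{y}\bar u\,dy'$, which is not pointwise dominated by $\bar u$; here the first decay condition of \eqref{Blowupcondition3}, yielding $\bar u\,e^{\psi}\to0$ as $y\to+\infty$, is indispensable. I would bound $\big(e^{\psi}\int_{+\infty}^{y}\bar u\,dy'\,\big|\,e^{\psi}\bar u\big)$ by the Cauchy--Schwarz-in-$y$ device used for $J_4$ in Lemma \ref{J46}: writing $e^{2\psi}=e^{\psi}e^{\psi}$ inside the inner integral and using $\int_{y}^{+\infty}e^{-2\psi}\,dy'\lesssim\langle t\rangle^{\gamma/2}$, this term is controlled by $\tfrac12\|y\,e^{\psi}\bar u\|_{L^2}^2+C\langle t\rangle^{\gamma}\|e^{\psi}\bar u\|_{L^2}^2$, the first piece again being swallowed by the reserved good term. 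Collecting everything gives $\frac{d}{dt}\|e^{\psi}\bar u(t,\cdot)\|_{L^2}^2\le C(M_T,t)\,\|e^{\psi}\bar u(t,\cdot)\|_{L^2}^2$ with vanishing initial value, so Gr\"onwall forces $\|e^{\psi}\bar u(t,\cdot)\|_{L^2}\equiv0$ on $[0,T]$ and hence $\bar u\equiv0$. Thus \eqref{OGP_Rx} admits only the trivial solution in the admissible class, which is the content of the lemma. Two technical points I would verify: that the $C^2$ regularity and the weighted decay of the solution legitimize each integration by parts with no boundary contribution at $y=+\infty$, and that $C(M_T,t)$ stays finite on the fixed interval $[0,T]$, so that the Gr\"onwall step closes on all of $H_T$ at once rather than on a short subinterval only.
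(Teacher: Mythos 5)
Your strategy is the paper's own: restrict to $\{x=0\}$, view \eqref{OGP_Rx} as a linear problem for $\bar u$ with coefficients controlled through \eqref{suppose0}, test with $e^{2\psi}\bar u$, bound the reaction, transport and nonlocal terms (the latter by Cauchy--Schwarz in $y$, exactly as for $J_4$ in Lemma \ref{J46}), absorb every $y^2$-weighted contribution into the good term $-(\psi_t+2\psi_y^2)$ of \eqref{weightControl}, and conclude by Gr\"onwall from zero data. The treatment of each individual term is sound --- your integration by parts in the transport term is an equivalent variant of the paper's direct Young-inequality bound --- and the first decay condition of \eqref{Blowupcondition3} is invoked in the right place.

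The gap is in the last step, the very point you flagged ``to verify'': the Gr\"onwall argument cannot be closed on all of $H_T$ at once. The terms you must absorb, of size $(M_T^2+\delta)\|y\,e^{\psi}\bar u\|_{L^2}^2$, carry a time-independent coefficient, while the good term supplies only $-(\psi_t+2\psi_y^2)\gtrsim \frac{\gamma-1}{16(1+t)^{\gamma+1}}\,y^2$, whose coefficient decays in $t$. Absorption therefore requires $\frac{\gamma-1}{16(1+t)^{\gamma+1}}\ge M_T^2+\delta$, which for fixed $\gamma$ holds only up to a time $T_1$ determined by $M_T$ and $\gamma$; and choosing $\gamma$ ``large relative to $M_T$'' does not rescue later times, since for fixed $t$ one has $\max_{\gamma}\frac{\gamma-1}{16(1+t)^{\gamma+1}}=\frac{1}{16e(1+t)^{2}\ln(1+t)}$, so when $M_T$ and $T$ are large no single choice of $\gamma$ makes the absorption valid on all of $[0,T]$. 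This is precisely why the paper first obtains $\bar u\equiv 0$ on a small interval $[0,T_1]$ by Gr\"onwall and then extends to $[0,T]$ by a continuation argument, restarting the estimate at $T_1$ (where the data vanish) with the weight re-centered in time; the step length is uniform, so finitely many steps cover $[0,T]$. With that standard correction, your proof coincides with the paper's.
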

\begin{proof}
 By multiplying the equation in \eqref{OGP_Rx} by $\bar{u}e^{2\psi}$ ($\psi$ is the same as given in \eqref{psi} with the parameter $\gamma$ large) and integrating in $y$, one obtains
\begin{align}\label{uniLin}
&\frac{d}{2dt}\int_{0}^{+\infty}{\bar{u}}^2e^{2\psi}dy
-\int_{0}^{+\infty}\partial_t \psi {\bar{u}}^2e^{2\psi}dy
+\int_{0}^{+\infty}(\partial_{y} {\bar{u}})^2 e^{2\psi}dy\nonumber\\
=&-\int_{0}^{+\infty}2\psi_{y}\partial_{y}{\bar{u}} {\bar{u}} e^{2\psi}dy
+\int_{0}^{+\infty}\overline{\partial_x u}{\bar{u}}^2e^{2\psi}dy
+\int_{0}^{+\infty}\bar{v}\partial_y {\bar{u}} {\bar{u}} e^{2\psi}dy\\
&+\int_{0}^{+\infty}\int^y_{\infty}\bar{u}dy' {\bar{u}} e^{2\psi}dy
:=I_1+I_2+I_3+I_4.\nonumber
\end{align}
By using integration by parts and Young's inequality,  one can control the terms on the right hand side of \eqref{uniLin} as follows:
\begin{align*}
|I_1|\leq &\int_{0}^{+\infty}2\psi_{y}^2 {\bar{u}}^2 e^{2\psi}dy
+\frac{1}{2}\int_{0}^{+\infty}(\partial_{y}{\bar{u}})^2 e^{2\psi}dy,\\
|I_3|\leq &\frac{1}{4}\int_{0}^{+\infty} (\partial_y {\bar{u}})^{2} e^{2\psi}dy
+\int_{0}^{+\infty}\bar{v}^2 {\bar{u}}^2 e^{2\psi}dy\\
\leq &\frac{1}{4}\int_{0}^{+\infty} (\partial_y {\bar{u}})^{2} e^{2\psi}dy
+\|\overline{\partial_x u}\|^2_{L^{\infty}(H_{T})}\int_{0}^{+\infty}y^2 {\bar{u}}^2 e^{2\psi}dy
\end{align*}
and
\begin{align*}
|I_4|\leq C_T\int_{0}^{+\infty}(\int_y^{\infty}{\bar{u}}^2 e^{2\psi}dy')^{\frac{1}{2}} |{\bar{u}}| e^{\psi}dy
 \leq  \delta \int_{0}^{+\infty}y^2 {\bar{u}}^2 e^{2\psi}dy+\frac{C_T^2}{2\delta}\int_{0}^{+\infty}{\bar{u}}^2 e^{\psi}dy
\end{align*}
with $\delta$ being sufficiently small.
Thus, we conclude that
\begin{align*}\label{uniLin}
&\frac{d}{2dt}\int_{0}^{+\infty}{\bar{u}}^2e^{2\psi}dy
-\int_{0}^{+\infty}(\partial_t \psi+2\psi_y^2 ){\bar{u}}^2e^{2\psi}dy
+\frac{1}{4}\int_{0}^{+\infty}(\partial_{y} {\bar{u}})^2 e^{2\psi}dy\nonumber\\
\leq&(\|\overline{\partial_x u}\|_{L^{\infty}(H_{T})}+\frac{C_T^2}{2\delta})
\int_{0}^{+\infty}{\bar{u}}^2 e^{2\psi}dy
+(\|\overline{\partial_x u}\|^2_{L^{\infty}(H_{T})}+\delta)
\int_{0}^{+\infty}y^2 {\bar{u}}^2 e^{2\psi}dy.
\end{align*}
In view of \eqref{Blowupcondition3}, \eqref{weightControl} and \eqref{suppose0}, letting $\gamma$ be large enough in the definition of $\psi(t,y)$,  we can achieve $\bar{u}(t,y) \equiv 0~(0\leq t \leq T_1)$ by using Gronwall's inequality in the above estimate for a small time $T_1$. By a continuation argument, we get $\bar{u}(t,y) \equiv 0$ for all $0\leq t \leq T$.
\end{proof}
 Denote by $\tilde{u}(t,y)=-\partial_{x}u(t,0,y)$ and $\tilde{U}(t)=-\partial_{x}U(t,0)$.
  With the aid of  the condition \eqref{Blowupcondition1} and Lemma \ref{zeroSol}, we know from \eqref{OGP} that $w=\tilde{u}-\tilde{U}$
 satisfies the problem
\begin{equation}\label{finBGP_Rx} \left\{\!\!
\begin{array}{lc}
\partial_t w-w^2+\partial_{y}^{-1}(w+\tilde{U})\partial_yw -2\tilde{U}w
+\int^y_{\infty}w dy'-\partial_{y}^2w=0, &\\[8pt]
w|_{t=0}=w_0\triangleq \tilde{u}_0(y)-\tilde{U}(0), &\\[8pt]
w|_{y=0}=-\tilde{U}(t),  ~~\lim \limits_{y \to +\infty} w= 0,
\end{array}
\right.
\end{equation}
where $\partial_{y}^{-1}f(y):=\int_{0}^{y} f(y')dy'$.
%Under the assumption \eqref{Blowupcondition1}, we claim that $w(t,y)\geq 0$.
%it implies $w_0\geq 0$ and $\tilde{U}\leq 0$, then

%Now we are going to prove that the solution to the problem \eqref{finBGP_Rx} must blow up in a finite time.
For the problem \eqref{finBGP_Rx}, first we have the following non-negative property of the solution.
\begin{lemma}
Under the assumptions \eqref{Blowupcondition1}, \eqref{Blowupcondition3} and \eqref{suppose0}, any classical solution of the problem \eqref{finBGP_Rx} is non-negative.
\end{lemma}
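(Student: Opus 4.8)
The plan is to show that the negative part $w_-:=\max\{-w,0\}$ of any classical solution of \eqref{finBGP_Rx} vanishes identically, which is equivalent to $w\ge 0$. First I would record the boundary behaviour of $w_-$. By the last inequality in \eqref{Blowupcondition1}, $w_0=\tilde u_0-\tilde U(0)=U_x(0,0)-u_{0x}(0,y)\ge 0$, so $w_-|_{t=0}=0$; since $w|_{y=0}=-\tilde U(t)=U_x(t,0)\ge 0$ by \eqref{Blowupcondition1}, one has $w_-(t,0)=0$; and the first decay relation in \eqref{Blowupcondition3} forces $w_-e^{\psi}\to 0$ as $y\to+\infty$. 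These are exactly the ingredients needed to run a weighted energy estimate on $w_-$ with the same weight $\psi$ as in \eqref{psi}, with $\gamma$ to be chosen large.

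Next I would test the equation in \eqref{finBGP_Rx} against $-w_-e^{2\psi}$ and integrate over $y\in(0,+\infty)$, using that $w=-w_-$ on $\{w<0\}$ and that $\partial_y w_-=-\partial_y w$ there. The time derivative produces $\frac12\frac{d}{dt}\int_0^{+\infty} w_-^2e^{2\psi}dy-\int_0^{+\infty}\psi_t w_-^2 e^{2\psi}dy$; the viscous term, after integrating by parts (the boundary terms at $y=0$ and $y=+\infty$ vanish by the facts above), yields the good dissipation $\int_0^{+\infty}|\partial_y w_-|^2 e^{2\psi}dy$ together with $2\int \psi_y w_-\partial_y w_- e^{2\psi}dy$, the latter absorbed by Young's inequality at the cost of $2\int \psi_y^2 w_-^2 e^{2\psi}dy$. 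Crucially, several nonlinear contributions carry a favourable sign: the quadratic term $-w^2$ gives $-\int w_-^3 e^{2\psi}dy\le 0$, and every term proportional to $\tilde U$ is nonpositive because $\tilde U=-U_x(t,0)\le 0$. The drift term $\partial_y^{-1}(w+\tilde U)\partial_y w$ integrates by parts into $\int b\,\psi_y w_-^2 e^{2\psi}dy$ (plus favourable pieces) with $b=\int_0^y(w+\tilde U)\,dy'$; since $w+\tilde U=-\partial_x u(t,0,\cdot)$ is bounded by $M_T$ thanks to \eqref{suppose0}, one has $|b|\le M_Ty$, so this term is $\lesssim M_T\int y^2 w_-^2 e^{2\psi}dy$. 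Collecting the weight contributions and invoking \eqref{weightControl}, the quantity $\int(\psi_t+2\psi_y^2)w_-^2 e^{2\psi}dy$ furnishes a strong negative term $\lesssim -\frac{\gamma-1}{(1+t)^{\gamma+1}}\int y^2 w_-^2 e^{2\psi}dy$ which, for $\gamma$ large depending on $T$ and $M_T$, dominates the $y^2$-weighted drift contribution.

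The main obstacle is the nonlocal term $\int_\infty^y w\,dy'$, which after testing becomes $\int_0^{+\infty}\bigl(\int_y^{+\infty} w\,dy'\bigr)w_-e^{2\psi}dy$ and involves the full solution rather than $w_-$ alone, so a direct bound in terms of $\|w_-\|$ is impossible. Here I would split $w=w_+-w_-$: the $w_+$-contribution equals $\int_0^{+\infty}\bigl(\int_y^{+\infty} w_+\,dy'\bigr)w_-e^{2\psi}dy\ge 0$ and enters the energy inequality with a nonpositive sign, hence can simply be discarded, while the remaining piece $\int_0^{+\infty}\bigl(\int_y^{+\infty} w_-\,dy'\bigr)w_-e^{2\psi}dy$ is estimated exactly as the term $I_4$ in the proof of Lemma \ref{zeroSol}: a weighted Cauchy--Schwarz inequality in $y'$ followed by a Hardy-type bound gives a control of the form $\delta\int y^2 w_-^2 e^{2\psi}dy+C_{T,\delta}\int w_-^2 e^{2\psi}dy$, whose first summand is again absorbed by the $-\int y^2 w_-^2 e^{2\psi}dy$ term. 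After all absorptions one is left with a differential inequality $\frac{d}{dt}\int_0^{+\infty} w_-^2 e^{2\psi}dy\le C_T\int_0^{+\infty} w_-^2 e^{2\psi}dy$. Since $w_-|_{t=0}=0$, Gronwall's inequality yields $\int_0^{+\infty} w_-^2 e^{2\psi}dy\equiv 0$ on a short interval, and a continuation argument extends this to $[0,T]$, giving $w_-\equiv 0$, that is, $w\ge 0$. The decisive point, and the only place where the nonlocal structure is genuinely felt, is the sign splitting $w=w_+-w_-$ that discards the positive part of the memory term.
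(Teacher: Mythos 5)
There is a genuine gap, and it lies exactly where your argument leans on the decay hypothesis. You justify the finiteness of the Gaussian-weighted energies and the vanishing of the boundary terms at $y=+\infty$ by asserting that ``the first decay relation in \eqref{Blowupcondition3} forces $w_-e^{\psi}\to 0$.'' But the first relation in \eqref{Blowupcondition3} concerns $u-U$, whereas the unknown $w$ in \eqref{finBGP_Rx} is $\tilde u-\tilde U=-(u_x-U_x)\big|_{x=0}$; the only decay assumed for this quantity is the \emph{second} relation, $(u_x-U_x)e^{y}\to 0$, i.e. $we^{y}\to 0$ (this is precisely what the paper records as \eqref{suppose}). Since $e^{\psi}$ grows like $e^{cy^{2}}$, exponential decay of $w$ gives no control whatsoever on $\int_0^{+\infty}w_-^2e^{2\psi}dy$ or $\int_0^{+\infty}y^2w_-^2e^{2\psi}dy$: for instance $w\sim -e^{-2y}$ satisfies $we^{y}\to0$ while $w^2e^{2\psi}\to+\infty$. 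So every energy integral in your scheme may be infinite, and the boundary terms you discard in the integrations by parts (e.g. $\partial_yw\,w_-e^{2\psi}$ and $y\,w_-^2e^{2\psi}$ at $y=+\infty$) need not vanish. The structural observations you make are correct — the sign of $-\int w_-^3e^{2\psi}dy$, the favourable sign of all $\tilde U$-terms since $\tilde U=-U_x(t,0)\le 0$, and the splitting $w=w_+-w_-$ that discards the positive part of the nonlocal term — but the functional framework in which you run them is not available under the stated hypotheses.

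Nor can the gap be closed by simply switching to the compatible weight $e^{2y}$: the drift coefficient $b=\partial_y^{-1}(w+\tilde U)$ satisfies only $|b|\le M_Ty$ by \eqref{suppose0}, so after integration by parts it produces a term of size $M_T\int_0^{+\infty}y\,w_-^2e^{2y}dy$ with an unboundedly growing factor $y$, and a time-independent exponential weight generates no analogue of the $-(\psi_t+2\psi_y^2)\gtrsim y^2$ dissipation \eqref{weightControl} to absorb it; Gronwall then fails. This is exactly the tension the paper's proof is designed to avoid: instead of an $L^2$ energy estimate it uses a pointwise minimum-principle argument, setting $V=we^{y-\lambda t}$ (a weight matched to the available decay $we^{y}\to0$), perturbing to $V^{\ve}=V+\ve$, and deriving a contradiction at the first interior zero $(t^*,y^*)$ of $V^{\ve}$ for $\lambda$ large. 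At such a point $\partial_yV^{\ve}=0$ and $V^{\ve}=0$, so the problematic drift and zeroth-order nonlocal-coefficient terms vanish identically there, and the linear growth of $\partial_y^{-1}$ only enters through the harmless bound $\ve\partial_y^{-1}\big((V^{\ve}-\ve)e^{\lambda t-y}\big)\le\ve\bar M_T$ coming from \eqref{suppose}. If you want to keep a truncation-type argument, you would have to run it on $V=we^{y-\lambda t}$ rather than on $w$ with the weight $\psi$ of \eqref{psi}, and even then the drift term forces you back to a pointwise rather than an integral argument.
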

\begin{proof}
Set $V=we^{y-\lambda t}$  for $\lambda>0$, then $V$ satisfies
\begin{equation}\label{modPoofPositive} \left\{\!\!
\begin{array}{lc}
\partial_t V+\big(2+\partial_{y}^{-1}(V e^{\lambda t-y}+\tilde{U})\big)\partial_y V
-\partial_{y}^{-1}(Ve^{\lambda t-y}+\tilde{U})V
 &\\[8pt]
~~~~~
+(\lambda -2\tilde{U}-1)V-\partial_{y}^2 V
=V^2e^{\lambda t -y}+e^{y}\int^{\infty}_y e^{-y}V dy', &\\[8pt]
V|_{t=0}=e^{y}{w}_{0}, &\\[8pt]
V|_{y=0}=-e^{-\lambda t}\tilde{U},  ~~\lim \limits_{y \to +\infty} V=0.
\end{array}
\right.
\end{equation}

For any fixed $\ve> 0$, we consider $V^{\ve}=V+\ve$,  which  satisfies
\begin{equation}\label{veModPoofPositive} \left\{\!\!
\begin{array}{lc}
\partial_t {V^{\ve}}+\big(2+\partial_{y}^{-1}((V^{\ve}-\ve)e^{\lambda t-y}+\tilde{U})\big)\partial_y {V^{\ve}}
-\partial_{y}^{-1}((V^{\ve}-\ve)e^{\lambda t-y}+\tilde{U}){V^{\ve}}&\\[8pt]
~~+(\lambda -2\tilde{U}-1)V^{\ve}
-\partial_{y}^2 {V^{\ve}}
=-\ve\partial_{y}^{-1}((V^{\ve}-\ve)e^{\lambda t-y})
-\ve\partial_{y}^{-1}\tilde{U}&\\[8pt]
~~+\ve(\lambda -2\tilde{U}-2)
+(V^{\ve})^2e^{\lambda t -y}-2\ve V^{\ve}e^{\lambda t-y}
+\ve^{2}e^{\lambda t-y}
+e^{y}\int^{\infty}_y e^{-y}{V^{\ve}} dy', &\\[8pt]
{V^{\ve}}|_{t=0}=e^{y}{w}_{0}+\ve, &\\[8pt]
{V^{\ve}}|_{y=0}=-e^{-\lambda t}\tilde{U}+\ve,  ~~\lim \limits_{y \to +\infty}{V^{\ve}} =\ve.
\end{array}
\right.
\end{equation}
Due to $V^{\ve}\geq \ve>0$ at $t=0$, we claim  that $V^{\ve}\geq0$ in $H_{T}$. Otherwise, let $t^{*}\in (0, T]$ be the first time such that $V^{\ve}=0$ at an interior point $(t^{*},y^{*})$,  then one has

(i) $V^{\ve} \geq 0 $ in $H_{t^{*}}$,

(ii) $V^{\ve} $ attains its minimum in $H_{t^{*}}$ at the point $(t^{*},y^{*})$.

In addition, under the assumption \eqref{suppose0} and the second condition in \eqref{Blowupcondition3}, one has that
\begin{equation}\label{suppose}
\|we^y\|_{L^{\infty}(H_T)}\leq \bar{M}_T,
\end{equation}
for a positive constant $\bar{M}_T>0$,
then  there holds
$$\ve\partial_{y}^{-1}((V^{\ve}-\ve)e^{\lambda t-y})\leq \ve \bar{M}_{T}.$$

 By noting that at $(t^{*}$, $y^{*})$, $\partial_t V^{\ve}\leq 0$, $\partial^2_y V^{\ve}\geq 0$, $V^{\ve}= 0$ and $\partial_y V^{\ve}=0$, and plugging these information into the first equation in \eqref{veModPoofPositive}, it leads to a  contradiction at $(t^{*}$, $y^{*})$ by choosing $\lambda>0$ properly large.
  As a consequence, it deduces that $V^{\ve}\geq0$ in $H_T$.
 Moreover, in virtue of the arbitrariness of $\ve$,  we conclude  that
 $$V\geq 0 ~\text{in}~H_{T},$$
 which implies
$$ w \geq 0 ~\text{in}~H_{T}.$$
Thereby we complete the proof of this lemma.
%It ends the proof of this lemma.
\end{proof}
 Next, we shall prove  that under certain condition, the solution $w$ of \eqref{finBGP_Rx} will tend to infinity in a finite time by constructing a Lyapunov functional.

 For this,  we define the Lyapunov functional as follows:
$$ G(t)=\int_{0}^{\infty}\rho(y) w(t,y)dy.$$
%we introduce the weight function
%\begin{equation}\label{rho}
%\rho(y)\in W^{2,\infty}(\mathbb{R}_+)\cap C^1(\mathbb{R}_+)\cap L^1(\mathbb{R}_+)
%\end{equation}
%and cutoff function $\eta(y)\in C^{\infty}(\mathbb{R})$ with real numbers $0<A<M<B<+\infty$.
Here the nonnegative weight $\rho(y)\in W^{2,\infty}(\mathbb{R}_+)\cap C^1(\mathbb{R}_+)\cap L^1(\mathbb{R}_+)$ is given by
\begin{equation}\label{rho}
\rho(y)=
\begin{cases}
f(y),& 0\leq y\leq B,\\
g(y),& y>B,
\end{cases}
\end{equation}
with real numbers $0<A<M<B<+\infty$, where $f(y)\in C^1([0,B])\cap C^2([0,B] \setminus \{A\})$ satisfies
 \begin{enumerate}[(F1)]
 \item $f(0)=0,~f(y)>0$ for any $y\in(0,B]$,\label{f1}
 \item $yf'(y)\leq C_{f}f(y)~\text{for some}~ C_f>0 ~\text{on}~[0,B]$,\label{f2}
 \item $\int_{0}^{y} f(y') dy'+f''(y)\geq 0$, on $[0,B]\setminus \{A\}$, \label{f3}
 \item $f''(y)\leq 0,~\text{on}~ [0,B]\setminus \{A\}$,\label{f4}
 \end{enumerate}
 and the extension $g(y)\in C^2([M,+\infty)\cap L^1([M,+\infty))$ satisfies
 \begin{enumerate}[(G1)]
 \item $\lim \limits_{y\rightarrow +\infty}g(y)
     =\lim \limits_{y\rightarrow +\infty}g'(y)=0,~g(y)>0~\text{for any}~y>M$,\label{g1}
 \item $g'(y)<0~\text{and}~g''(y)>0~\text{for any}~y>M$,\label{g2}
 \item $\frac{(g')^2}{gg''}\leq \beta <1~ \text{for any}~y\geq B$.\label{g3}
\end{enumerate}
Moreover, we construct a cut-off function $\eta(y)\in C^{\infty}(\mathbb{R})$ such that
\begin{equation}\label{weiteta}
\eta(y) =0~(\forall y<M), ~\eta(y)=1~(\forall y>B)~\text{and}~ 0\leq \eta'\leq \frac{2}{B-M},
\end{equation}
and the functions $f(y)$ and $g(y)$ obey the compatibility conditions:
\begin{enumerate}[(FG1)]
\item $f(B)=g(B), ~f'(B)=g'(B)$,\label{fg1}
\item $\eta\frac{(g')^2}{fg''}\leq \beta <1~\text{and}~ 2\eta'g'+\eta g -f''\geq0,~ \text{for any}~ y\in[M,B]$.\label{fg2}
\end{enumerate}
An example of the weight $\rho(y)$ shall be given in Appendix.

For the Lyapunov functional $G(t)$ with the solution $w$ of the problem \eqref{finBGP_Rx}, we have the following inequality.
\begin{lemma}
With the weight function $\rho$ having the properties (F1)-(F4), (G1)-(G3) and (FG1)-(FG2), the functional $G(t)$ satisfies the following estimate,
\begin{align}\label{bloupG}
\frac{dG}{dt}\geq\frac{2(1-\beta)}{\|\rho\|_{L^1(\mathbb{R}_{+})}}G^2
-\|\tilde{U}\|_{L^{\infty}([0,T])}(3+C_f)G
\end{align}
under the assumption \eqref{Blowupcondition1}.
\end{lemma}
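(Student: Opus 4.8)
The plan is to differentiate $G(t)=\int_0^\infty\rho(y)w(t,y)\,dy$ in time, insert $\partial_t w$ from \eqref{finBGP_Rx}, and integrate by parts in $y$, arranging the structural hypotheses on $\rho$ so that the right-hand side dominates a positive multiple of $\int_0^\infty\rho w^2\,dy$. Writing $W(y)=\partial_y^{-1}w=\int_0^y w\,dy'$ and using $\partial_y^{-1}(w+\tilde U)=W+y\tilde U$, the equation gives $\partial_t w=w^2-(W+y\tilde U)\partial_y w+2\tilde Uw+\int_y^{\infty}w\,dy'+\partial_y^2w$, so that
\[
\frac{dG}{dt}=\int_0^\infty\rho w^2\,dy-\int_0^\infty\rho(W+y\tilde U)\partial_y w\,dy+2\tilde UG+\int_0^\infty\rho\!\int_y^{\infty}\!w\,dy'\,dy+\int_0^\infty\rho\,\partial_y^2w\,dy.
\]
I would record at the outset the three sign facts driving the estimate: from the preceding lemma $w\ge0$, hence $G\ge0$ and $\int_y^\infty w\,dy'\ge0$; from \eqref{Blowupcondition1} one has $\tilde U=-U_x(\cdot,0)\le0$; and from (F1), $f(0)=0$ and, since $f>0$ on $(0,B]$, also $f'(0)\ge0$.

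Next I would carry out the integrations by parts. For the diffusion term, integrating by parts twice and using $\rho(0)=f(0)=0$ together with the decay \eqref{Blowupcondition3} kills the boundary contributions at infinity and the leading one at $y=0$, leaving $\int_0^\infty\rho\,\partial_y^2w\,dy=-f'(0)\tilde U+\int_0^\infty\rho''w\,dy$; here $-f'(0)\tilde U\ge0$ and may be discarded. By Fubini the nonlocal term equals $\int_0^\infty w\big(\int_0^y\rho\,dy'\big)\,dy$, so combining it with $\int_0^\infty\rho''w\,dy$ produces $\int_0^\infty w\big(\int_0^y\rho\,dy'+\rho''\big)\,dy$, which is nonnegative: on $[0,B]$ this is exactly (F3), and on $(B,\infty)$ it is positive since $\int_0^y\rho\ge0$ and $g''>0$ by (G2). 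For the $W$-part of the transport term, one integration by parts gives $-\int_0^\infty\rho W\partial_y w\,dy=\int_0^\infty\rho w^2\,dy+\int_0^\infty\rho'Ww\,dy$, and since $Ww=\tfrac12(W^2)'$ and $\rho$ is $C^1$ across $y=B$ by (FG1), a further integration by parts (boundary terms vanishing because $W(0)=0$ and $\rho'\to0$ at infinity by (G1)) yields $\int_0^\infty\rho'Ww\,dy=-\tfrac12\int_0^\infty\rho''W^2\,dy$. Thus the two reaction-type contributions combine into $2\int_0^\infty\rho w^2\,dy$, matching the factor $2$ in \eqref{bloupG}, at the cost of the term $-\tfrac12\int_0^\infty\rho''W^2\,dy$.

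The terms carrying $\tilde U$ are handled using $\tilde U\le0$ and $w\ge0$. The linear term contributes $2\tilde UG\ge-2\|\tilde U\|_{L^{\infty}([0,T])}G$. For the $\tilde U$-part of the transport term, one integration by parts gives $-\tilde U\int_0^\infty\rho y\partial_y w\,dy=\tilde U\int_0^\infty(\rho+y\rho')w\,dy$, and the pointwise bound $\rho+y\rho'\le(1+C_f)\rho$ -- valid on $[0,B]$ by (F2) and on $(B,\infty)$ because $yg'<0$ -- forces $\int_0^\infty(\rho+y\rho')w\,dy\le(1+C_f)G$, whence this term is $\ge-\|\tilde U\|_{L^{\infty}([0,T])}(1+C_f)G$. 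Collecting the coefficients $2$ and $1+C_f$ gives precisely the factor $3+C_f$ in \eqref{bloupG}.

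It remains to absorb the one genuinely adverse term $-\tfrac12\int_0^\infty\rho''W^2\,dy$, and this is the main obstacle. On $[0,B]$ it is harmless, since $\rho''=f''\le0$ there by (F4) makes $-\tfrac12\int_0^B f''W^2\,dy\ge0$; the difficulty is the far field $(B,\infty)$ where $\rho''=g''>0$. Here I would prove the weighted Hardy-type inequality $\tfrac12\int_0^\infty\rho''W^2\,dy\le2\beta\int_0^\infty\rho w^2\,dy$. The mechanism is to work with the cutoff $\eta$ from \eqref{weiteta}: differentiating $\eta g'W^2$ and integrating over $(M,\infty)$, the boundary term at $y=M$ vanishes because $\eta(M)=0$ and the one at infinity vanishes by (G1), leaving $\int_M^\infty\eta g''W^2\,dy=-\int_M^\infty\eta'g'W^2\,dy-2\int_M^\infty\eta g'Ww\,dy$. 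A Young inequality applied to the last integral, in which $(g')^2/g''$ is controlled by $\beta g$ through (G3) (and through the first inequality of (FG2) on the overlap $[M,B]$), converts it into a fraction of $\int\eta g''W^2$ plus $\beta\int g w^2$, while the cutoff remainder $-\int_M^\infty\eta'g'W^2\,dy$ is dominated using the second inequality of (FG2), namely $2\eta'g'+\eta g-f''\ge0$, together with the leftover concavity $-f''\ge0$ on $[0,B]$. Balancing the Young parameter then yields the factor $2\beta$. Finally, Cauchy--Schwarz gives $G^2\le\|\rho\|_{L^1(\mathbb{R}_{+})}\int_0^\infty\rho w^2\,dy$, so that the surviving $2(1-\beta)\int_0^\infty\rho w^2\,dy$ is bounded below by $\tfrac{2(1-\beta)}{\|\rho\|_{L^1(\mathbb{R}_{+})}}G^2$, completing \eqref{bloupG}. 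The delicate point throughout is the bookkeeping in this last step: matching the cutoff-induced transition terms on $[M,B]$ against the compatibility conditions (FG1)--(FG2) so that no uncontrolled positive contribution survives.
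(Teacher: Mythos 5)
Your proposal is correct and takes essentially the same route as the paper's proof: the identical expansion of $\frac{dG}{dt}$ via integration by parts (producing $2\int_0^\infty\rho w^2\,dy$, the adverse term $-\tfrac12\int_0^\infty\rho''(\partial_y^{-1}w)^2\,dy$, the $\tilde U$-terms bounded by $-(3+C_f)\|\tilde U\|_{L^\infty([0,T])}G$ using (F2), $w\ge 0$ and $\tilde U\le 0$, the nonnegative combination of the nonlocal term with $\int\rho''w\,dy$ via (F3) and (G2), and the discarded boundary term $-\rho'(0)\tilde U\ge0$), followed by the same cutoff-plus-Young mechanism with (G3) and (FG1)--(FG2) to absorb the far-field part of $\rho''$, and Cauchy--Schwarz to pass from $\int\rho w^2\,dy$ to $G^2/\|\rho\|_{L^1(\mathbb{R}_+)}$. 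The only differences are cosmetic: you package the key step as a standalone weighted Hardy-type inequality and group the transport $\tilde U$-term as $\rho+y\rho'\le(1+C_f)\rho$, whereas the paper carries out the same estimates inline.
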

\begin{proof}
Noting that $\rho(0)=\lim \limits_{y\rightarrow +\infty}\rho(y)
=\lim \limits_{y\rightarrow +\infty}\rho'(y)=0$, by integrating by parts, one deduces from \eqref{finBGP_Rx} that
\begin{equation}\label{LyaEqu}
\begin{split}
\frac{dG}{dt}=&\int_{0}^{\infty}\rho w^2dy
-\int_{0}^{\infty}\rho\partial_{y}^{-1}w\partial_yw dy
-\int_{0}^{\infty}\rho\partial_{y}^{-1}\tilde{U}\partial_yw dy
+2\int_{0}^{\infty}\rho\tilde{U}w dy\\
&+\int_{0}^{\infty}\rho\int^{\infty}_y w dy'dy
+\int_{0}^{\infty}\rho\partial_{y}^2wdy\\
=&2\int_{0}^{\infty}\rho w^2dy
-\frac{1}{2}\int_{0}^{\infty}\rho''(\partial_{y}^{-1}w)^2dy
+\tilde{U}\int_{0}^{\infty}y\rho' w dy
+3\tilde{U}\int_{0}^{\infty}\rho w dy\\
&+\int_{0}^{\infty}\int_{0}^{y}\rho dy' wdy
+\int_{0}^{\infty}\rho''wdy
-\rho'(0)\tilde{U}\\
:= & \sum_{k=1}^{7} J_{k}.
\end{split}
\end{equation}

Let us estimate the right hand side of \eqref{LyaEqu} term by term. Similar to \cite{Kuk-V-W},
thanks to (F\ref{f1}), (F\ref{f3}), (G\ref{g1})-(G\ref{g3}), \eqref{weiteta} and (FG\ref{fg2}), for the terms $J_1$ and $J_2$ we have
\begin{align*}
J_1+J_2=
%&2\int_{0}^{\infty}\rho w^2dy
%-\frac{1}{2}\int_{0}^{\infty}\rho''(\partial_{y}^{-1}w)^2dy\\
%=
&2\int_{0}^{\infty}\rho w^2dy
-\frac{1}{2}\int_{0}^{\infty}\eta\rho''(\partial_{y}^{-1}w)^2dy
-\frac{1}{2}\int_{0}^{\infty}(1-\eta)f''(\partial_{y}^{-1}w)^2dy\\
\geq &2\int_{0}^{\infty}\rho w^2dy
+\frac{1}{2}\int_{M}^{B} (\eta g''-f'')(\partial_{y}^{-1}w)^2dy
-\frac{1}{2}\int_{M}^{\infty}\eta g''(\partial_{y}^{-1}w)^2dy\\
%= &2\int_{0}^{\infty}\rho w^2dy
%+\frac{1}{2}\int_{M}^{B} (\eta g''-f'')(\partial_{y}^{-1}w)^2dy
%+\frac{1}{2}\int_{M}^{\infty}\eta g''(\partial_{y}^{-1}w)^2dy
%-\int_{M}^{\infty}\eta g''(\partial_{y}^{-1}w)^2dy\\
%\geq &2\int_{0}^{\infty}\rho w^2dy %for simplicity
%+\frac{1}{2}\int_{M}^{B} (\eta g''-f'')(\partial_{y}^{-1}w)^2dy
%+\frac{1}{2}\int_{M}^{\infty}\eta g''(\partial_{y}^{-1}w)^2dy\\
%&+\int_{M}^{\infty}\eta' g'(\partial_{y}^{-1}w)^2dy
%+2\int_{M}^{\infty}\eta g'w \partial_{y}^{-1}wdy\\
%\geq &2\int_{0}^{\infty}\rho w^2dy
%+\frac{1}{2}\int_{M}^{B} (\eta g''-f'')(\partial_{y}^{-1}w)^2dy
%+\frac{1}{2}\int_{M}^{\infty}\eta g''(\partial_{y}^{-1}w)^2dy\\
%&+\int_{M}^{\infty}\eta' g'(\partial_{y}^{-1}w)^2dy
%-2\int_{M}^{\infty}\eta \frac{(g')^2}{g''}w^2dy
%-\frac{1}{2}\int_{M}^{\infty}\eta g'' (\partial_{y}^{-1}w)^2dy\\
\geq &2\int_{0}^{\infty}\rho w^2dy
+\frac{1}{2}\int_{M}^{B} (2\eta' g'+\eta g''-f'')(\partial_{y}^{-1}w)^2dy
\\
&
-2\int_{B}^{\infty}\eta\frac{(g')^2}{g g''} gw^2dy
-2\int_{M}^{B}\eta \frac{(g')^2}{fg''}fw^2dy\\
\geq &2(1-\beta)\int_{0}^{\infty}\rho w^2dy\\
\geq & \frac{2(1-\beta)}{\|\rho\|_{L^1(\mathbb{R}_{+})}}G^2.
\end{align*}
%where, in passing from the fourth line to the fifth line, we estimate last term in the fourth line by using Young's inequality.

For the terms $J_l~(l=3,4,5,6)$, by using the properties (F\ref{f2})-(F\ref{f3}), (G\ref{g2}) and the condition \eqref{Blowupcondition1}, these terms can be bounded as below:
\begin{align*}
J_3+J_4=&\tilde{U}\int_{0}^{\infty}y\rho' w dy
+3\tilde{U}\int_{0}^{\infty}\rho w dy\\
\geq &-\|\tilde{U}\|_{L^{\infty}([0,T])}(3+C_f)G
\end{align*}
and
\begin{align*}
J_5+J_6\geq \int_{A}^{B}(\int_{0}^{A}\rho dy'+f'')wdy\geq 0.
\end{align*}

Combining the above estimates, it arrives at the inequality \eqref{bloupG} by noting
 that $J_7\geq0$ from the condition \eqref{Blowupcondition1}.
\end{proof}

\textbf{Proof of Theorem \ref{blowup}. }
From the inequality \eqref{bloupG}, we know that there exists a time $0<t^{*}\leq T$ such that
$$\lim \limits_{t\to t^{*-}} G(t)=+\infty$$
when
 $$
G(0)>\frac{\|\rho\|_{L^1(\mathbb{R}_{+})}\|\tilde{U}\|_{L^{\infty}([0,T])}(3+C_f)}{2(1-\beta)}
\frac{1}{1-e^{-\|\tilde{U}\|_{L^{\infty}([0,T])}(3+C_f)T}},
$$
which implies that $\lim \limits_{t\to t^{*-}} \|w\|_{L^{\infty}(H_{t})}=+\infty$ with the aid of the condition \eqref{Blowupcondition3} and the construction of $\rho$.

In particular, when $U \equiv 0$, the inequality
\eqref{bloupG} simplifies into
$$\frac{dG}{dt}\geq\frac{2(1-\beta)}{\|\rho\|_{L^1(\mathbb{R}_{+})}}G^2,$$
which implies that
$G(t)$ always blows up in a finite time for any given nonzero initial value satisfying the condition \eqref{Blowupcondition1}.
Thus we conclude that the solution $w$ to the problem \eqref{finBGP_Rx} must blow up in a finite time with large enough initial value, which is a contradiction with the assumption \eqref{suppose0}. Thereby we complete the proof of Theorem \ref{blowup}. \qed

\section*{Appendix: The construction of the weight $\rho(y)$}

Inspired by \cite{Kuk-V-W}, we construct $\rho(y)\in W^{2,\infty}(\mathbb{R}_+)\cap C^1(\mathbb{R}_+)$ by the profile
\begin{equation*}
\rho(y)=
\begin{cases}
ky, 0\leq y<A,\\
ay^2+by+c, A \leq y< B,\\
\frac{1}{(y+h)^{\gamma}},   B \leq y,
\end{cases}
\end{equation*}
where the parameters $A, B, a, b, c, k, h$ and $\gamma$ will be specified later. Denote by
$$f(y)=\rho(y)\quad {\rm for}~0\leq y<B, \qquad g(y)=\frac{1}{(y+h)^{\gamma}}\quad {\rm for }~y\geq 0, h>0.$$
To guarantee $\rho\in C^{1}(\mathbb{R}_+)$, i.e. the property (FG\ref{fg1}) holds, we need the relations (R):
\begin{align*}
a&=-\frac{1}{B^2-A^2}\frac{B(1+\gamma)+h}{(B+h)^{\gamma+1}},
&b=-\frac{\gamma}{(B+h)^{\gamma+1}}+\frac{2B}{B^2-A^2}\frac{B(1+\gamma)+h}{(B+h)^{\gamma+1}},\\
c&=-\frac{A^2}{B^2-A^2}\frac{B(1+\gamma)+h}{(B+h)^{\gamma+1}},
&k=-\frac{\gamma}{(B+h)^{\gamma+1}}+\frac{2}{B+A}\frac{B(1+\gamma)+h}{(B+h)^{\gamma+1}}.
\end{align*}

Now it is left to choose the proper parameters $A$, $B$, $\gamma$ and  $h$ such that  the above properties hold.

It is easy to verify the properties (F\ref{f1}),(F\ref{f4}) and (G\ref{g1})-(G\ref{g2}) provided that $k>0$, $a<0$ and $\gamma>1$. Let $$\frac{1}{2}kA^2+a\geq 0,$$
then it implies the property (F\ref{f2})-(F\ref{f3}), especially it is sufficient to  choose
%$$\gamma=2,~A>2, B-A>2~\text{and}~ h >A \gamma.$$
$$A>0,~B\geq \frac{1}{A^2}+A~\text{and}~h>A\gamma.$$
%To guarantee the property (vi), we just need to verify $\frac{1}{2}kA^2+a\geq 0$, if we choose $\gamma=2$, $A>2, B-A>2$ and $h >A \gamma$, we can achieve the property (vi).
Obviously, the property (G\ref{g3}) holds with $\beta=\frac{2\gamma +1}{2\gamma+2}$.

To obtain the property (FG2), we construct a
cutoff function $\eta_s(z)\in C^{\infty}(\mathbb{R})$ with $0 \leq \eta'_s(z) \leq 2$, and
\begin{equation}\label{eta}
\eta_s(z)=
\begin{cases}
0, &z<0;\\
\text{smooth connection}, &0 \leq z< 1;\\
1,   &z\geq 1.
\end{cases}
\end{equation}
and define
$$\eta(y)=\eta_s(\frac{y-M}{B-M})$$
 with $M\in (A, B)$ being determined later.

For the first inequality in the property (FG\ref{fg2}), it is sufficient to choose the point $M$ and the parameter $h$ such that
$$\min\Big\{\frac{g(M)}{g(B)},\frac{g(M)}{f(M)}\Big\}\leq \beta\frac{\gamma+1}{\gamma}=\frac{2\gamma+1}{2\gamma}.$$

In fact,  one has  $$\frac{g(M)}{f(M)}\leq \frac{2\gamma+1}{2\gamma}$$
provided that $h\geq\frac{B-M}{(\frac{2\gamma+1}{2\gamma})^{\frac{1}{\gamma}}}$.

On the other hand, to achieve the inequality $\frac{g(M)}{f(M)}\leq \frac{2\gamma+1}{2\gamma}$, it is equivalent to
\begin{equation}\label{constrMain}
\frac{(B+h)^{\gamma+1}}{(M+h)^{\gamma}}\leq \frac{2\gamma+1}{2\gamma}
[\frac{(2BM-M^2-A^2)(B(1+\gamma)+h)}{B^2-A^2}-\gamma M],
\end{equation}
which can be obtained by choosing $M$ and $h$ such that
\begin{equation*}
\begin{cases}
 \frac{M}{B}\geq \frac{2}{\alpha^{\frac{1}{\gamma}}}-1,\\
\frac{M}{B}\geq 1-\sqrt{(1-(\frac{A}{B})^2)\frac{1}{2\gamma+2}},\\
h\geq(4\gamma+4)\gamma B,
\end{cases}
\end{equation*}
where $\alpha=\frac{4\gamma+1}{4\gamma}$.
%One can choose $\beta=\frac{2\gamma +1}{2\gamma+2}$ and $B-M$ small enough such that
%$ \frac{g(M)}{g(B)}\leq \frac{2\gamma +1}{2\gamma}$.

Moveover, by taking $h\geq \frac{4\gamma(B^2-A^2)}{B-M}$, one has
$$-\frac{4}{B-M}\frac{\gamma}{(M+h)^{\gamma+1}} -a =\big(-\frac{4\gamma}{B-M}+\frac{B(1+\gamma)+h}{B^2-A^2}\big)\frac{1}{(B+h)^{\gamma+1}}\geq 0,$$
which guarantees the second inequality in the property (FG\ref{fg2}).

For example, if we can take $A=2$, $M=4.5$, $B=5$, $\gamma=2$, $h=400$ and $C_f=1$, and the parameters $k$,~$a,~b$ and $c$ are given in the relations (R), then the functions $f(y)$ and $g(y)$ satisfy all properties listed above.  Thereby we complete the construction of $\rho$.

\iffalse
\begin{align*}
(\int_{0}^{\infty}\rho wdy)^2\leq \int_{0}^{\infty}(\sqrt{\rho} w )^2dy\int_{0}^{\infty}(\sqrt{\rho} )^2dy
\end{align*}
It implies
\begin{align*}
J_1=2\int_{0}^{\infty}\rho w^2dy\geq \frac{2}{\|\rho\|_{L^1(\mathbb{R}_{+})}}G^2,
\end{align*}

Especially, if $\tilde{U}$ vanishes, the solution will blow up as long as the lifespan of the solution is large enough.

\begin{align*}
J_2=&-\frac{1}{2}\int_{0}^{\infty}\rho''(\partial_{y}^{-1}w)^2dy\\
=&
\end{align*}
\fi

%$$\partial_t {V^{\ve}} \leq 0, \partial_y {V^{\ve}}=0, \partial_y^{2} {V^{\ve}}\geq0 ~\text{and}~ \partial_{y}^{-1}\tilde{U}\leq0, $$

\iffalse
i.e.,
\begin{equation}\label{veModPoofPositive} \left\{\!\!
\begin{array}{lc}
\partial_t {V^{\ve}}+\big(2+\partial_{y}^{-1}((V^{\ve}-\ve)e^{\lambda t-y}+\tilde{U})\big)\partial_y {V^{\ve}}
-\partial_{y}^{-1}((V^{\ve}-\ve)e^{\lambda t-y}+\tilde{U}){V^{\ve}}
+(\lambda +2\ve e^{\lambda t-y}-2\tilde{U}-1)V^{\ve}&\\
-\partial_{y}^2 {V^{\ve}}
=\ve(\lambda -2\tilde{U}-2)-\ve\partial_{y}^{-1}(V^{\ve}e^{\lambda t-y}+\tilde{U})
+(V^{\ve})^2e^{\lambda t -y}
+\ve^{2}e^{\lambda t}
+e^{y}\int^{\infty}_y e^{-y}{V^{\ve}} dy', &\\
{V^{\ve}}|_{t=0}=e^{y}{w}_{0}+\ve, &\\
{V^{\ve}}|_{y=0}=-e^{-\lambda t}\tilde{U}+\ve,  ~~{V^{\ve}} \rightarrow \ve~~ \text{ as } ~y\rightarrow
\infty.
\end{array}
\right.
\end{equation}
\fi

\vspace{.2in}
\noindent{\bf Acknowledgments:} This research was partially supported by
National Natural Science Foundation of China (NNSFC) under Grant No. 11631008.

\end{document}